\newcommand{\impl}{\rightarrow}
\newtheorem{theorem}{Theorem}[section]
\newtheorem{definition}[theorem]{Definition}
\newtheorem{proposition}[theorem]{Proposition}
\newtheorem{$^{*}$proposition}[theorem]{$^{*}$Proposition}
\newtheorem{$^{*}$corollary}[theorem]{$^{*}$Corollary}
\newtheorem{remark}[theorem]{Remark}
\numberwithin{equation}{theorem}
\begin{document}
\title{ VARIABLE-BASIS FUZZY FILTERS}
\author{Joaqu\'\i n Luna-Torres$^{\lowercase{a}}$  \,\ {\lowercase{and}}\,\ Carlos Orlando Ochoa C.$^{\lowercase{b}}$}
\dedicatory{$^a$  Universidad Sergio Arboleda\\ 
 $^b$  Universidad Distrital Francisco Jose de Caldas} 
\email{$^a$jluna@ima.usergioarboleda.edu.co}
\email{$^b$ oochoac@udistrital.edu.co}
\keywords{Ground categories; variable-basis fuzzy filter; variable-basis topological spaces; concrete category; fuzzy filter continuous morphism; faithful functor}
\subjclass[2010]{54A40,54B10,06D72}
\footnote{Partially supported by Universidad de Cartagena}
\begin{abstract}
U. H{\"o}hle and A. {\v{S}}ostak  have developed in \cite{HS}  the  category  of complete quasi-monoidal lattices; S. E. Rodabaugh in \cite{RO} proposed its opposite category and with   a subcategory $\mathbf{C}$ of the  latter,  he define grounds of the form  $\mathbf{SET\times C}$.

In this paper, for each ground category of the form  $\mathbf{SET\times C}$, we study categorical frameworks for variable-basis fuzzy filters, particularly  the category $\mathbf{C-FFIL}$ of variable-basis fuzzy filters, as a natural generalization of the category of fixed-basis fuzzy filters which was introduced in \cite{LO}. In addition, we get some relations between the category of variable-basis fuzzy filters and the category  of variable-basis fuzzy  topological  spaces.
\end{abstract}
\maketitle 
\baselineskip=1.7\baselineskip
\section*{0. Introduction}
The mathematical theories being created in  fuzzy sets are determined by certain data, incluiding the following: the ground (or base) category for a set theory, the powerset operators of the graound category, the forgetful (or underlying) functor  from  the concrete category   into the ground category, and the particular structure carried by the objets of the concrete category and preserved by the morphisms of that category (cf.\cite{AD} and \cite{RO}). \newline 
Variable-basis fuzzy filter (as variable-basis fuzzy topology in \cite{RO}) request grounds which are products of two categories: The category $\mathbf{SET}$, which carries ``point-set" information, and the second category, which in this paper will be a subcategory $\mathbf{C}$ of the category $\mathbf{LOQML\equiv CQML^{op}}$. The category $\mathbf{CQML}$ of complete quasi-monoidal lattices was being developed by U. H{\"o}hle and A. {\v{S}}ostak in \cite{HS}.\newline
Following P. T. Johnstone (\cite{Jhst}), within the text of the paper, those propositions, lemmas and theorems whose proofs require Zorn's lemma are distinguished by being marked with an asterisk.
\section{Ground categories $\mathbf{SET\times C}$ and $\mathbf{SET\times L_{\Phi}}$}
\subsection{Ground categories}
S. E. Rodabaugh presents in \cite{RO}   the ground categories $\mathbf{SET\times C}$ and $\mathbf{SET\times L_{\Phi}}$ as follows:
\begin{definition}\cite{RO}\ 
Let $\mathbf{C}$ be a subcategory of $\mathbf{LOQML}$. The ground category  $\mathbf{SET\times C}$ comprises the following data:
\begin{enumerate}
\item[(SC1)]{\bf  Objects.} $(X,L)$, where $X\in \mathbf{|SET|}$ and $ L\in \mathbf{|C|}$. The object $(X,L)$ is a {\bf (ground) set}.
\item[(SC2)]{\bf Morphisms.} $\left( f,\Phi \right) : (X,L)\rightarrow (Y,M)$, where $f:X\rightarrow Y$ in $\mathbf{SET}$,  $\Phi:L\rightarrow M $ in $\mathbf{C}$. The morphism $\left( f,\Phi \right)$ is a {\bf(ground) function}.
\item[(SC3)] {\bf Composition.} Component-wise in the respective categories.
\item[(SC4)] {\bf Identities.} Component-wise in the respective categories, i.e. \linebreak $id_{(X,L)} = (id_X, id_L)$.
\end{enumerate}
\end{definition}
\begin{definition}\cite{RO}\ 
Let $\mathbf{C}$  be a  subcategory of $\mathbf{LOQML}$, $L\in \mathbf{|C|}$ and
\linebreak
 $\Phi\in Hom(\mathbf{C},\mathbf{C})$. The ground category  $\mathbf{SET\times L_{\Phi}}$ comprises the following data:
\begin{enumerate}
\item[(SL1)]{\bf  Objects.} $(X,L)$, where $X\in \mathbf{|SET|}$. The object $(X,L)$ es un {\bf (ground) set}.
\item[(SL2)]{\bf Morphisms.} $\left( f,\Phi \right) : (X,L)\rightarrow (Y,L)$, where $f:X\rightarrow Y$ in $\mathbf{SET}$. The morphism  $\left( f,\Phi \right)$ is a {\bf (ground) function}.
\item[(SL3)] {\bf Composition.} $(f,\Phi)\circ (g,\Phi)=(f\circ g, \Phi)$, where the composition in the first component is in $\mathbf{SET}$.
\item[(SL4)] {\bf Identities.} $id_{(X,L)} = (id_X, \Phi)$, where the identity in the first component is in $\mathbf{SET}$.
\end{enumerate}
\end{definition}
\begin{remark}[{\bf Powerset operators}]
For $\left( f,\Phi \right) : (X,L)\rightarrow (Y,M)$ in $\mathbf{SET\times C}$ or   $\mathbf{SET\times L_{\Phi}}$, the forward powerset operator,\linebreak  $\left( f,\Phi \right)^{\rightarrow}:L^X\rightarrow M^Y$ is defined by
\[
\left( f,\Phi \right)^{\rightarrow}(a) = \bigwedge \{b\mid f_L^{\rightarrow}(a)\leqslant \left( \Phi^{op} \right)(b)\},
\]
where $ f_L^{\rightarrow}:L^X\rightarrow L^Y$ is defined by $ f_L^{\rightarrow}(a)(y)=\bigvee_{f(x)=y}a(x)$, and \linebreak $\left( \Phi^{op} \right):L^Y\leftarrow M^Y$ is defined by $\left( \Phi^{op} \right)(b)=\Phi^{op}\circ b$.
The backward powerset operator  $\left( f,\Phi \right)^{\leftarrow} : L^X\leftarrow M^Y$ is defined by
\[
\left( f,\Phi \right)^{\leftarrow}(b) =  \Phi^{op} \circ b\circ f,
\]
in other words, that diagram
\[
\begin{diagram}
\node{X}\arrow{e,t}{f}\arrow{s,l}{\left( f,\Phi \right)^{\leftarrow}(b)}\node{Y}\arrow{s,r}{b}\\
\node{L}\node{M}\arrow{w,b}{\Phi^{op}}
\end{diagram}
\]
is commutative.
\end{remark}
\begin{remark}
\begin{enumerate}\
\item Note that  $\mathbf{SET\times L_{\Phi}}$ is a subcategory of  $\mathbf{SET\times C}$ if and only if  $\Phi = id.$
\item When $\Phi^{op}$ preserves  arbitrary meets, it has a co-adjoint (or left adjoint):
\[
^{*}\Phi:L\rightarrow M,\,\,\,\,\,\ \text{defined by}\,\,\ ^{*}\Phi(\alpha)= \bigwedge\{ \beta \in M\mid \alpha\leqslant \Phi^{op}(\beta)\}
\]
\item Clearly, $\left( f,\Phi \right)^{\leftarrow}$ and $\left( f,\Phi \right)^{\rightarrow}$ constitute an adjoint pair.
\item $\left( f,\Phi \right)$ is an isomorphism in $\mathbf{SET\times C}$ iff $f$ and $\Phi$ are isomorphisms in $\mathbf{SET}$ and $\mathbf{ C}$, respectively, iff $f$ and $\Phi$ are bijections.
\end{enumerate}
\end{remark}

\section{The category $\mathbf{C-FFIL}$}
The categorical frameworks detailed below rest on a ground category of the form $\mathbf{SET\times C}$, where $\mathbf{C}$ is a subcategory of $\mathbf{LOQML}$. This section  generalizes  the category $\mathbf{C-FIL}$ studied in \cite{LO} for this kind of ground categories. 
\begin{definition}\label{c-ffil}
Let $\mathbf{C}$  be a  subcategory of  $\mathbf{LOQML}$. The category  $\mathbf{C-FFIL}$ consists of the following data:
\begin{enumerate}
\item[(FF1)]{\bf  Objects:} Ordered triples $(X,L,\digamma)$ satisfying the following axioms:
\begin{enumerate}
\item  $(X,L) \in |\mathbf{SET\times C}|$.
\item  $\digamma: L^X\rightarrow L$ is a mapping, called a {\bf fuzzy filter}, satisfying:
\begin{enumerate}
\item $\digamma(1_X)=\top$, where $1_X: X\rightarrow L$ is the constant mapping  defined by $1_X(t)=\top,\,\,\ \text{for all}\,\ t\in X$.
\item $\digamma(0_X)=\bot$, where $0_X: X\rightarrow L$ is the constant mapping defined by $1_X(t)=\bot,\,\,\ \text{for all}\,\ t\in X$.
\item  $f\leqslant g$ implies $ \digamma(f)\leqslant \digamma(g)$, for all  $f, g \in L^X$
\item $\digamma(f)\otimes \digamma(g)\leqslant \digamma(f\otimes g)$, for all $f, g \in L^X$
\end{enumerate}
\item {\bf Equality of objects:} $(X,L,\digamma)=(Y, M, \varOmega)$\ if and only if\linebreak $(X,L)=(Y, M)$\ en \ $\mathbf{SET\times C}$,\ and \ $\digamma = \varOmega$\ as $\mathbf{SET}$-mappings from  $L^X\equiv M^Y$ to $L\equiv M$.
\end{enumerate}
\item[(FF2)]{\bf Morphisms:} Ordered pairs  $\left( f,\phi \right) : (X,L,\digamma)\rightarrow (Y, M,\varOmega)$, called {\bf fuzzy filter continuous morphisms}, satisfying the following axioms: 
\begin{enumerate}
\item  $\left( f,\phi \right) : (X,L)\rightarrow (Y,M)$ is a morphism in $\mathbf{SET\times C}$
\item $\phi^{op} \circ \varOmega \leqslant \digamma\circ(f,\phi)^{\leftarrow}$ on $M^Y$.
\end{enumerate}
\item[(FF3)] The composition of morphisms is realized as in the category $\mathbf{SET\times C}$.
\end{enumerate}
$(X, L, \digamma)$ is called a {\bf fuzzy filtered set}.
\end{definition}

An alternate expression  for fuzzy filter continuous morphisms is
\begin{proposition}
On  $M^Y$ the following holds:
\[
\phi^{op} \circ \varOmega\leqslant \digamma\circ(f,\phi)^{\leftarrow}\,\,\ \text{ if and only if}\,\,\,\ \varOmega \leqslant (\phi^{op})^{*}\circ \digamma\circ (f,\phi)^{\leftarrow},
\]
where $(\Phi^{op})^{*}$ is the right (Galois) adjoint of $\Phi^{op}$.
\end{proposition}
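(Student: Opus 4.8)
The plan is to observe that both inequalities in the statement are comparisons of mappings out of $M^Y$—the left-hand side between maps $M^Y \to L$ and the right-hand side between maps $M^Y \to M$—where in each case the order is taken pointwise on the domain $M^Y$. Consequently the whole biconditional can be verified one argument at a time, and at each argument it collapses to the defining property of the Galois adjunction between $\phi^{op}$ and its right adjoint $(\phi^{op})^{*}$. So no global or structural argument is needed: the statement is a pointwise transcription of a single adjunction.

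First I would fix an arbitrary $b \in M^Y$ and abbreviate
\[
m := \varOmega(b) \in M, \qquad \ell := \digamma\bigl((f,\phi)^{\leftarrow}(b)\bigr) \in L.
\]
With this notation the left-hand inequality evaluated at $b$ reads $\phi^{op}(m) \leqslant \ell$ in $L$, while the right-hand inequality evaluated at $b$ reads $m \leqslant (\phi^{op})^{*}(\ell)$ in $M$. Since $(\phi^{op})^{*}$ is by hypothesis the right adjoint of $\phi^{op}$, the Galois adjunction gives
\[
\phi^{op}(m) \leqslant \ell \quad \Longleftrightarrow \quad m \leqslant (\phi^{op})^{*}(\ell)
\]
for every $m \in M$ and $\ell \in L$. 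Applying this to our particular $m$ and $\ell$ yields the equivalence of the two inequalities at the point $b$; as $b$ was arbitrary and both orders are pointwise, the two global inequalities on $M^Y$ are equivalent, which is exactly the assertion.

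There is no computational obstacle here, so the only points demanding care are bookkeeping ones. I would make sure the adjunction is invoked in the correct direction—$(\phi^{op})^{*}$ being the \emph{right} adjoint forces $\phi^{op}(m) \leqslant \ell \iff m \leqslant (\phi^{op})^{*}(\ell)$ rather than the reverse—and that the composite types line up, namely $(f,\phi)^{\leftarrow}\colon M^Y \to L^X$, so that $\digamma \circ (f,\phi)^{\leftarrow}\colon M^Y \to L$ and the two sides are genuinely comparable after applying $\phi^{op}\colon M \to L$ on the left or $(\phi^{op})^{*}\colon L \to M$ on the right. The existence of the right adjoint is presupposed in the statement, so I would cite it rather than reprove it.
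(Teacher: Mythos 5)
Your proof is correct and takes essentially the same route as the paper: both reduce the claim to the Galois correspondence between $\phi^{op}$ and $(\phi^{op})^{*}$, the paper by applying the adjoints to both sides and you by invoking the adjunction equivalence $\phi^{op}(m)\leqslant \ell \iff m\leqslant (\phi^{op})^{*}(\ell)$ pointwise on $M^Y$. These are two standard, interchangeable formulations of the same adjunction argument, and your version is if anything more explicit about the typing.
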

\begin{proof}
For necessity, we apply $(\Phi^{op})^{*}$ to both sides, invoking the fact that $\left((\Phi^{op})^{*}, \Phi^{op}\right)$ is a Galois correspondence; and for suficiency, we apply $\Phi^{op}$ to both sides, invoking again  that $\left((\Phi^{op})^{*}, \Phi^{op}\right)$ is a Galois correspondence.
\end{proof}
\begin{proposition}
Let $\mathbf{C}$ be a subcategory of \ $\mathbf{LOQML}$. Then   $\mathbf{C-FFIL}$ is a concrete category over $\mathbf{SET\times C}$.
\end{proposition}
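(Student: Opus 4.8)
The plan is to produce the forgetful functor
\[
U : \mathbf{C-FFIL} \longrightarrow \mathbf{SET\times C}, \qquad U(X,L,\digamma) = (X,L), \quad U(f,\phi) = (f,\phi),
\]
and to verify that it is faithful; by definition (cf. \cite{AD}) a category admitting a faithful functor into $\mathbf{SET\times C}$ is concrete over $\mathbf{SET\times C}$. Since $U$ merely discards the fuzzy filter $\digamma$ and the continuity condition (FF2)(b), the only preliminary issue is to confirm that the data of Definition \ref{c-ffil} really assemble into a category; the sole non-formal point there is that fuzzy filter continuous morphisms are closed under the composition of $\mathbf{SET\times C}$ prescribed by (FF3).

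First I would establish this closure. Let $(f,\phi):(X,L,\digamma)\rightarrow(Y,M,\varOmega)$ and $(g,\psi):(Y,M,\varOmega)\rightarrow(Z,N,\Theta)$ satisfy (FF2)(b), so that on $M^Y$ and $N^Z$ respectively
\[
\phi^{op}\circ\varOmega \leqslant \digamma\circ(f,\phi)^{\leftarrow}, \qquad \psi^{op}\circ\Theta \leqslant \varOmega\circ(g,\psi)^{\leftarrow}.
\]
I would use three structural facts available from the Remark and from $\mathbf{C}\subseteq\mathbf{LOQML}$: the backward powerset operator is contravariantly functorial, giving $(g\circ f,\psi\circ\phi)^{\leftarrow} = (f,\phi)^{\leftarrow}\circ(g,\psi)^{\leftarrow}$; the passage $\phi\mapsto\phi^{op}$ is contravariant, giving $(\psi\circ\phi)^{op}=\phi^{op}\circ\psi^{op}$; and each $\phi^{op}$ is order-preserving. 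Then for $c\in N^Z$, applying the monotone map $\phi^{op}$ to the second inequality evaluated at $c$ and chaining with the first inequality evaluated at $(g,\psi)^{\leftarrow}(c)\in M^Y$ yields
\[
(\psi\circ\phi)^{op}\circ\Theta \;\leqslant\; \digamma\circ(g\circ f,\psi\circ\phi)^{\leftarrow},
\]
which is precisely (FF2)(b) for the composite $(g,\psi)\circ(f,\phi)$. I would also note that the identity $(id_X,id_L)$ is fuzzy filter continuous, since $(id_X,id_L)^{\leftarrow}=id_{L^X}$ and $id_L^{op}=id_L$ reduce (FF2)(b) to $\digamma\leqslant\digamma$. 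Associativity and the unit laws are inherited verbatim from $\mathbf{SET\times C}$, so $\mathbf{C-FFIL}$ is a category.

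Granting this, the functoriality of $U$ is immediate: it preserves identities because these are formed component-wise in both categories, and it preserves composition because (FF3) defines composition in $\mathbf{C-FFIL}$ to be that of $\mathbf{SET\times C}$. Faithfulness is then the short final step. A morphism of $\mathbf{C-FFIL}$ carries no information beyond its underlying pair $(f,\phi)$ --- condition (FF2)(b) is a property that this pair either has or lacks, not extra structure --- so if two parallel $\mathbf{C-FFIL}$ morphisms have the same image under $U$ they are equal. Hence $U$ is injective on hom-sets, i.e. faithful, and $\mathbf{C-FFIL}$ is concrete over $\mathbf{SET\times C}$.

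The single substantive step is the closure of (FF2)(b) under composition; everything else is bookkeeping. The main obstacle I anticipate is ensuring that the two structural facts I rely on --- contravariant functoriality of $(-)^{\leftarrow}$ and monotonicity of $\phi^{op}$ --- genuinely hold for morphisms of an arbitrary subcategory $\mathbf{C}$ of $\mathbf{LOQML}$. Once these are secured, the required inequality follows by the direct monotone chase above.
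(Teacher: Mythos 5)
Your proposal is correct and follows essentially the same route as the paper: both reduce the matter to checking that fuzzy filter continuity is closed under composition, and both do so via the identical monotone chain $(\psi\circ\phi)^{op}\circ\Theta=\phi^{op}\circ\psi^{op}\circ\Theta\leqslant\phi^{op}\circ\varOmega\circ(g,\psi)^{\leftarrow}\leqslant\digamma\circ(f,\phi)^{\leftarrow}\circ(g,\psi)^{\leftarrow}=\digamma\circ(g\circ f,\psi\circ\phi)^{\leftarrow}$, with faithfulness of the forgetful functor noted as immediate. Your additional remarks on identities and on why the functor is injective on hom-sets are just the bookkeeping the paper leaves implicit.
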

\begin{proof}
Clearly, the forgetful functor $V:\mathbf{C-FFIL}\rightarrow \mathbf{SET\times C}$ is faithful. Then, the main point to be checked is composition. Let the following morphisms be given in  $\mathbf{C-FFIL}$:
\[
\left( f,\phi \right) : (X,L,\digamma_1)\rightarrow (Y,M,\digamma_2)\,\,\ and \,\,\,\ \left( g,\psi \right): (Y,M,\digamma_2)\rightarrow (Z,N ,\digamma_3),
\]
then 
\begin{align*}
(\psi\circ\phi)^{op}\circ \digamma_3&= \phi^{op}\circ\psi^{op}\circ \digamma_3\\
&\leqslant \phi^{op}\circ\digamma_2\circ(g,\psi)^{\leftarrow}\\
&\leqslant \digamma_1\circ (f,\phi)^{\leftarrow}\circ(g,\psi)^{\leftarrow}\\
&=\digamma_1\circ( g\circ f, \psi \circ\phi)^{\leftarrow}.
\end{align*}
This shows that  $( g\circ f, \psi \circ\phi)$ is a fuzzy filter continuous morphisms.
\end{proof}
\begin{proposition}
Let $(X,L)$ be a $\mathbf{SET\times C}$-object\,\ and  let $(X,L,\digamma_{\lambda})_{\lambda \in \Lambda}$ be a non-empty family of  fuzzy filtered sets. Then the mapping  
\[
\mathcal F : L^X \rightarrow L \,\,\,\,\,\ \text{defined by}\,\,\,\,\,\ \mathcal F(g)= \bigwedge_{\lambda \in \Lambda}\digamma_{\lambda}(g)
\]
is a fuzzy filter on the ground set  $(X,L)$.
\end{proposition}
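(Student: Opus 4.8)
The plan is to verify directly that $\mathcal F$ satisfies the four defining conditions of a fuzzy filter listed in (FF1)(b): normalization at $1_X$ and at $0_X$, isotonicity, and the tensor inequality. In each case the statement for $\mathcal F$ will be extracted from the corresponding statement for the individual $\digamma_\lambda$ by passing to the meet over $\lambda$, exploiting only that $\bigwedge_{\lambda}$ is the greatest lower bound of its arguments. Since all the $\digamma_\lambda$ share the same domain $L^X$ and codomain $L$ (the family is over a fixed ground set $(X,L)$), the meet is computed pointwise in the complete lattice $L$ and $\mathcal F$ is a well-defined map $L^X\to L$.

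For the first three axioms the argument is immediate. Because $\digamma_\lambda(1_X)=\top$ for every $\lambda$, we get $\mathcal F(1_X)=\bigwedge_{\lambda}\top=\top$; similarly $\mathcal F(0_X)=\bigwedge_{\lambda}\bot=\bot$. This last identity is the single point at which the hypothesis $\Lambda\neq\varnothing$ is genuinely used, for the empty meet collapses to $\top$ and would destroy the axiom $\mathcal F(0_X)=\bot$. Isotonicity follows because $f\leqslant g$ yields $\digamma_\lambda(f)\leqslant\digamma_\lambda(g)$ for each $\lambda$, so the meet of the left-hand sides is bounded above by the meet of the right-hand sides, whence $\mathcal F(f)\leqslant\mathcal F(g)$.

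The tensor inequality is the step I expect to be the main obstacle, since in a complete quasi-monoidal lattice $\otimes$ need not distribute over arbitrary meets; I would therefore avoid distributivity and argue instead through the lower-bound characterization of the meet together with the isotonicity of $\otimes$. Fix $f,g\in L^X$ and an arbitrary index $\mu\in\Lambda$. From $\mathcal F(f)\leqslant\digamma_\mu(f)$ and $\mathcal F(g)\leqslant\digamma_\mu(g)$, and the isotonicity of $\otimes$ in each argument, I obtain $\mathcal F(f)\otimes\mathcal F(g)\leqslant\digamma_\mu(f)\otimes\digamma_\mu(g)$; applying axiom (FF1)(b)(iv) for $\digamma_\mu$ then gives $\digamma_\mu(f)\otimes\digamma_\mu(g)\leqslant\digamma_\mu(f\otimes g)$. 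As $\mu$ was arbitrary, the element $\mathcal F(f)\otimes\mathcal F(g)$ is a common lower bound of the family $\bigl(\digamma_\mu(f\otimes g)\bigr)_{\mu\in\Lambda}$, hence is dominated by its meet, so that $\mathcal F(f)\otimes\mathcal F(g)\leqslant\bigwedge_{\mu}\digamma_\mu(f\otimes g)=\mathcal F(f\otimes g)$. This completes the verification, the only structural input beyond the lattice operations being the monotonicity of the quasi-monoidal tensor $\otimes$.
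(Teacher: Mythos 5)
Your proof is correct and follows essentially the same route as the paper: verify the four axioms, with the first three immediate from the meet and the tensor axiom obtained via the inequality $\bigwedge_{\lambda}\digamma_{\lambda}(f)\otimes\bigwedge_{\lambda}\digamma_{\lambda}(g)\leqslant\bigwedge_{\lambda}\bigl(\digamma_{\lambda}(f)\otimes\digamma_{\lambda}(g)\bigr)$, which the paper asserts in one step and you justify explicitly through the lower-bound characterization of the meet and the isotonicity of $\otimes$. Your remark that $\Lambda\neq\varnothing$ is needed only for $\mathcal F(0_X)=\bot$ is a correct observation the paper leaves implicit.
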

\begin{proof}
In fact,
\begin{enumerate}
\item[i.] $\mathcal F(1_X)=\bigwedge_{\lambda \in \Lambda}\digamma_\lambda(1_X)=\bigwedge_{\lambda \in \Lambda}\top =\top$.
\item [ii.]$\mathcal F(0_X)=\bigwedge_{\lambda \in \Lambda}\digamma_\lambda(0_X)=\bigwedge_{\lambda \in \Lambda}\bot =\bot$.
\item[iii.] If $f\leqslant g$\,\ then \,\ $\digamma_\lambda(f)\leqslant \digamma_\lambda(g)$\,\ for each\,\ $\lambda \in \Lambda$,\,\ therefore
\[
\mathcal F(f)=\bigwedge_{\lambda \in \Lambda}\digamma_\lambda(f)\leqslant \bigwedge_{\lambda \in \Lambda}\digamma_\lambda(g)=F(g).
\]
\item[iv.] Finally, for each $f, g\in L^X$\,\ we have
\begin{equation*}
\begin{split}
\mathcal F(f)\otimes \mathcal F(g)&=\bigwedge_{\lambda \in \Lambda}\digamma_\lambda(f)\otimes \bigwedge_{\lambda \in \Lambda}\digamma_\lambda(g) \leqslant \bigwedge_{\lambda \in \Lambda}\left( \digamma_\lambda(f)\otimes \digamma_\lambda(g)\right)\\
&\leqslant \bigwedge_{\lambda \in \Lambda}\digamma_\lambda(f\otimes g)=\mathcal F(f\otimes g).
\end{split}
\end{equation*}
\end{enumerate}
\end{proof}
\begin{theorem}
Let $\mathbf{C}$ be a subcategory of $\mathbf{LOQML}$, let\linebreak $\left( f,\Phi \right) : (X,L)\rightarrow (Y,M)$  in $\mathbf{SET\times C}$, let $(\Phi^{op})^{*}$ be the right adjoint of $\Phi^{op}: L\leftarrow M$ and let $(X,L,\digamma)$ be a fuzzy filtered set. Then the following holds:
\begin{enumerate}
\item $\digamma_{(f,\Phi)}:= (\Phi^{op})^{*}\circ \digamma\circ(f,\Phi)^{\leftarrow}:M^Y\rightarrow M$ is a fuzzy filter,
\item $\left( f,\Phi \right) : (X,L,\digamma)\rightarrow (Y,M, \digamma_{(f,\Phi)})$ is a fuzzy filter continuous morphisms,
\item $\left( f,\Phi \right) : (X,L,\digamma)\rightarrow (Y,M, \digamma_1)$ is a fuzzy filter continuous morphism if and only if  $\digamma_1\leqslant \digamma_{(f,\Phi)}$,
\item $\left( f,\Phi \right) : (X,L,\digamma)\rightarrow (Y,M, \digamma_{(f,\Phi)})$ is a final morphism in $\mathbf{C-FFIL}$,
\item $\left( f,\Phi \right) : (X,L,\digamma)\rightarrow (Y,M, \digamma^{'})$ is a final morphism in $\mathbf{C-FFIL}$ if and only if   $\digamma^{'}\leqslant \digamma_{(f,\Phi)}$.
\end{enumerate}
\end{theorem}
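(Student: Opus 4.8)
The plan is to treat the five parts as a single chain built on two tools: the preceding Proposition, which reformulates fuzzy filter continuity as the Galois inequality $\varOmega\leqslant(\Phi^{op})^{*}\circ\digamma\circ(f,\Phi)^{\leftarrow}$, and a composition identity for the pushforward $\digamma_{(f,\Phi)}=(\Phi^{op})^{*}\circ\digamma\circ(f,\Phi)^{\leftarrow}$. First I would prove (1); then (3) drops out almost immediately, (2) is the instance $\digamma_1=\digamma_{(f,\Phi)}$ of (3), and (4)--(5) follow from the composition identity.

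For (1) I would check the four axioms of (FF1)(b)(ii) for $\digamma_{(f,\Phi)}$. Monotonicity is immediate, since $\digamma_{(f,\Phi)}$ is a composite of monotone maps: $(f,\Phi)^{\leftarrow}$ is order preserving, $\digamma$ is a fuzzy filter, and the right adjoint $(\Phi^{op})^{*}$ is monotone. For the unit axiom I would use that $\Phi^{op}$, as a $\mathbf{LOQML}$-morphism, preserves $\top$, so $(f,\Phi)^{\leftarrow}(1_Y)=1_X$; then $\digamma(1_X)=\top$ and the preservation of arbitrary meets (hence of $\top$) by the right adjoint give $\digamma_{(f,\Phi)}(1_Y)=\top$. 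The bottom axiom is analogous but must also invoke that $(\Phi^{op})^{*}$ sends $\bot$ to $\bot$. The one substantial axiom is the $\otimes$-inequality, and I expect this to be the main technical obstacle: here I would chain the lax monoidality of the right adjoint, $(\Phi^{op})^{*}(a)\otimes(\Phi^{op})^{*}(b)\leqslant(\Phi^{op})^{*}(a\otimes b)$ (which follows from the strong monoidality of $\Phi^{op}$ together with the counit inequality $\Phi^{op}\circ(\Phi^{op})^{*}\leqslant id$), the filter axiom (d) for $\digamma$, and the pointwise identity $(f,\Phi)^{\leftarrow}(p\otimes q)=(f,\Phi)^{\leftarrow}(p)\otimes(f,\Phi)^{\leftarrow}(q)$ coming from $\Phi^{op}$ preserving $\otimes$. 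This is exactly the step where the monoidal structure of $\mathbf{LOQML}$, and not merely its order, is needed.

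Statement (3) is then a direct translation: by (FF2)(b), the morphism $(f,\Phi):(X,L,\digamma)\to(Y,M,\digamma_1)$ is continuous iff $\Phi^{op}\circ\digamma_1\leqslant\digamma\circ(f,\Phi)^{\leftarrow}$, and the preceding Proposition rewrites this as $\digamma_1\leqslant(\Phi^{op})^{*}\circ\digamma\circ(f,\Phi)^{\leftarrow}=\digamma_{(f,\Phi)}$. Setting $\digamma_1=\digamma_{(f,\Phi)}$ gives (2) at once, since $\digamma_{(f,\Phi)}\leqslant\digamma_{(f,\Phi)}$.

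The engine for (4) and (5) is the identity $(\digamma_{(f,\Phi)})_{(g,\psi)}=\digamma_{(g\circ f,\psi\circ\Phi)}$, valid for every ground morphism $(g,\psi):(Y,M)\to(Z,N)$. I would prove it with the same bookkeeping as in the concreteness proposition, namely $(g\circ f,\psi\circ\Phi)^{\leftarrow}=(f,\Phi)^{\leftarrow}\circ(g,\psi)^{\leftarrow}$ together with $((\psi\circ\Phi)^{op})^{*}=(\psi^{op})^{*}\circ(\Phi^{op})^{*}$ (the right adjoint of a composite is the composite of the right adjoints, in reversed order). Granting this, (4) follows by unwinding the universal property of a final morphism: for each $(g,\psi)$ and each $(Z,N,\Theta)$, continuity of $(g,\psi)$ out of $(Y,M,\digamma_{(f,\Phi)})$ is equivalent by (3) to $\Theta\leqslant(\digamma_{(f,\Phi)})_{(g,\psi)}$, while continuity of the composite $(g\circ f,\psi\circ\Phi)$ is equivalent to $\Theta\leqslant\digamma_{(g\circ f,\psi\circ\Phi)}$, and the identity collapses the two conditions into one. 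For (5), necessity is immediate, since a final morphism is in particular a morphism and (3) forces $\digamma^{'}\leqslant\digamma_{(f,\Phi)}$; for sufficiency I would again push $\digamma^{'}\leqslant\digamma_{(f,\Phi)}$ through the composition identity and (3). The delicate point in (5) is reconciling the direction of this inequality with the quantifier in the definition of final morphism, so I would settle (5) only after (4) and the composition identity are available as lemmas.
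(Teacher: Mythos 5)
Your treatment of (1)--(4) is sound and close in spirit to the paper's: the paper likewise verifies the four filter axioms directly for (1) (its $\otimes$-step tacitly uses the same lax-monoidality inequality $(\Phi^{op})^{*}(a)\otimes(\Phi^{op})^{*}(b)\leqslant(\Phi^{op})^{*}(a\otimes b)$ that you derive from the counit, though it writes that step as an equality), proves (2) from the counit $\Phi^{op}\circ(\Phi^{op})^{*}\leqslant id_L$, proves (3) from the unit, and proves (4) by pushing the unit through $\Phi^{op}\circ\Psi^{op}\circ\digamma^{'}\leqslant\digamma\circ(f,\Phi)^{\leftarrow}\circ(g,\Psi)^{\leftarrow}$. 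Your packaging of (4) via the identity $(\digamma_{(f,\Phi)})_{(g,\Psi)}=\digamma_{(g\circ f,\Psi\circ\Phi)}$ is a genuinely cleaner route: it reduces finality to two applications of (3) and in fact shows that continuity of the composite is \emph{equivalent} to continuity of $(g,\Psi)$, rather than merely implying it. One caveat in (1): $(\Phi^{op})^{*}(\bot)=\bot$ does not follow from adjointness alone (right adjoints preserve meets, not joins); it requires that $\Phi^{op}(\beta)=\bot$ force $\beta=\bot$. The paper asserts this equality without comment, so you are no worse off, but do not present it as a formal consequence of the Galois connection.

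The genuine gap is the sufficiency half of (5), and you have located it correctly without closing it --- because it cannot be closed as stated. Pushing $\digamma^{'}\leqslant\digamma_{(f,\Phi)}$ through (3) and your composition identity yields $\Psi^{op}\circ\digamma^{''}\leqslant\digamma_{(f,\Phi)}\circ(g,\Psi)^{\leftarrow}$, whereas continuity of $(g,\Psi)$ out of $(Y,M,\digamma^{'})$ needs $\Psi^{op}\circ\digamma^{''}\leqslant\digamma^{'}\circ(g,\Psi)^{\leftarrow}$; the hypothesis points the wrong way. Indeed, testing finality of $(f,\Phi):(X,L,\digamma)\rightarrow(Y,M,\digamma^{'})$ against $(id_Y,id_M)$ with target $(Y,M,\digamma_{(f,\Phi)})$ --- whose composite with $(f,\Phi)$ is continuous by (2) --- forces $\digamma_{(f,\Phi)}\leqslant\digamma^{'}$, so a final $\digamma^{'}$ must \emph{equal} $\digamma_{(f,\Phi)}$. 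This is exactly what the paper's own necessity argument ends by proving (``Hence $\digamma^{'}=\digamma_{(f,\Phi)}$''), even though the paper also waves off sufficiency with ``as in the previous case''. So the obstruction you flagged is real, the biconditional in (5) should read $\digamma^{'}=\digamma_{(f,\Phi)}$, and no rearrangement of your lemmas will rescue the $\leqslant$ version.
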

\begin{proof} 
\begin{enumerate}
\item We check the fuzzy filter  axioms of $\digamma_{(f,\Phi)}$ as follows:
\begin{enumerate}
\item
\begin{align*}
\digamma_{(f,\Phi)}(1_Y)
&=\left [ (\Phi^{op})^{*}\circ \digamma\circ(f,\Phi)^{\leftarrow}\right](1_Y)\\
&=(\Phi^{op})^{*}\left( \digamma(\Phi^{op}\circ 1_Y\circ f)\right)\\
&=(\Phi^{op})^{*}(\digamma(1_X),\ (\text{$\Phi$ is a morphism  in $\mathbf{CQML}$})\\
&=(\Phi^{op})^{*}(\top)= \top,\ (\text{$(\Phi^{op})^{*}$ is the right adjoint of $\Phi^{op}$}).
\end{align*}
\item
\begin{align*}
\digamma_{(f,\Phi)}(O_Y)
&=\left [ (\Phi^{op})^{*}\circ \digamma\circ(f,\Phi)^{\leftarrow}\right](0_Y)\\
&=(\Phi^{op})^{*}\left( \digamma(\Phi^{op}\circ 0_Y\circ f)\right)\\
&=(\Phi^{op})^{*}(\digamma(0_X))=(\Phi^{op})^{*}(\bot)= \bot.
\end{align*}
\item For $h,k \in M^Y$ and $h\leqslant k$, we have
\begin{align*}
\digamma_{(f,\Phi)}(h)
&= \left [ (\Phi^{op})^{*}\circ \digamma\circ(f,\Phi)^{\leftarrow}\right](h)\\
&=(\Phi^{op})^{*}\left( \digamma(\Phi^{op}\circ h\circ f)\right)\\
&\leqslant(\Phi^{op})^{*}\left( \digamma(\Phi^{op}\circ k\circ f)\right)=\digamma_{(f,\Phi)}(k).
\end{align*}
\item Let $h,k \in M^Y$ then we have
\begin{align*}
& \digamma_{(f,\Phi)}(h)\otimes  \digamma_{(f,\Phi)}(k)\\
&= \left [ (\Phi^{op})^{*}\circ \digamma\circ(f,\Phi)^{\leftarrow}\right](h)\otimes\left [ (\Phi^{op})^{*}\circ 
\digamma\circ(f,\Phi)^{\leftarrow}\right](k) \\
&=(\Phi^{op})^{*}\left( \digamma(\Phi^{op}\circ h\circ f)\right)\otimes (\Phi^{op})^{*}\left( \digamma
(\Phi^{op}\circ k\circ f)\right)\\
&=(\Phi^{op})^{*}\left[ \digamma(\Phi^{op}\circ h\circ f)\otimes \digamma(\Phi^{op}\circ k\circ f)\right]\\
&\leqslant(\Phi^{op})^{*}\left[ \digamma\left\{(\Phi^{op}\circ h\circ f)\otimes (\Phi^{op}\circ k\circ f)\right\}\right]\\
&\leqslant(\Phi^{op})^{*}\left[ \digamma\left\{(\Phi^{op}\circ (h\otimes k)\circ f)\right\}\right]\\
&\leqslant(\Phi^{op})^{*}\left[ \digamma\left\{(\Phi^{op}\circ h\circ f)\otimes (\Phi^{op}\circ k\circ f)\right\}\right]\\
&=(\Phi^{op})^{*}\left[ \digamma\left\{\Phi^{op}\circ (h\otimes  k)\circ f\right\}\right]\\
&= \left [ (\Phi^{op})^{*}\circ \digamma\circ(f,\Phi)^{\leftarrow}\right](h\otimes k)\\
&=\digamma_{(f,\Phi)}(h\otimes k)
\end{align*}
\end{enumerate}
\item Since $\Phi^{op}\circ (\Phi^{op})^{*}\leqslant id_L$, we have 
\begin{align*}
(\Phi^{op})\circ  \digamma_{(f,\Phi)}&= \Phi^{op}\circ(\Phi^{op})^{*}\circ \digamma\circ(f,\Phi)^{\leftarrow}\\
&\leqslant \digamma\circ(f,\Phi)^{\leftarrow},
\end{align*}
thus  $\left( f,\Phi \right) : (X,L,\digamma)\rightarrow (Y,M, \digamma_{(f,\Phi)})$ is a fuzzy filter continuous morphisms.
\item Let  $\left( f,\Phi \right) : (X,L,\digamma)\rightarrow (Y,M, \digamma_1)$ be a fuzzy filter continuous morphisms. Then we have 
\[
\Phi^{op}\circ \digamma_1\leqslant \digamma\circ (f,\Phi)^{\leftarrow},
\]
which implies that
\begin{align*}
\digamma_1&\leqslant (\Phi^{op})^{*}\circ \Phi^{op}\circ \digamma_1\\
& \leqslant (\Phi^{op})^{*}\circ\digamma\circ(f,\Phi)^{\leftarrow} \\
&= \digamma_{(f,\Phi)}.
\end{align*}
For necessity, let $ \digamma_1\leqslant\digamma_{(f,\Phi)}$. Since  $\left( f,\Phi \right) : (X,L,\digamma)\rightarrow (Y,M, \digamma_{(f,\Phi)})$ is a fuzzy filter continuous morphisms, we have
\begin{align*}
\Phi^{op}\circ \digamma_1&\leqslant \Phi^{op}\circ  \digamma_{(f,\Phi)}\\
& \leqslant \digamma\circ(f,\Phi)^{\leftarrow}. \\
\end{align*}
\item To show  $\left( f,\Phi \right) : (X,L,\digamma)\rightarrow (Y,M, \digamma_{(f,\Phi)})$ is a final morphism in  $\mathbf{C-FFIL}$, we must verify that for each  $(Z,N, \digamma^{'})\in \mathbf{C-FFIL}$, and for each $(g, \Psi)\in Hom \left( (Y,M), (Z, N)\right)$ in $ \mathbf{SET\times C}$, the following holds:
\[
(g,\Psi)\circ (f,\Phi) \in Hom_{\mathbf{C-FFIL}} \left( (X,L,\digamma), (Z,N,\digamma^{'})\right)
\]
implies
\[
(g,\Psi) \in Hom_{\mathbf{C-FFIL}} \left( (Y, M,\digamma_{(f,\Phi)}), (Z,N,\digamma^{'})\right).
\]
Since $(g,\Psi)\circ (f,\Phi)$ is a fuzzy filter continuous morphisms, it follows
\[
\Phi^{op}\circ\Psi^{op}\circ \digamma^{'}\leqslant \digamma\circ (f,\Phi)^{\leftarrow}\circ(g,\Psi)^{\leftarrow}.
\]
Now from the definition of  $\digamma_{(f,\Phi)}$ it  follows
\begin{align*}
\Psi^{op}\circ \digamma^{'}&\leqslant (\Phi^{op})^{*}\circ \Phi^{op}\circ \Psi^{op}\circ \digamma^{'}\\
& \leqslant (\Phi^{op})^{*}\circ\digamma\circ (f,\Phi)^{\leftarrow}\circ(g,\Psi)^{\leftarrow}\\
&= \digamma_{(f,\Phi)}\circ (g,\Psi)^{\leftarrow}.
\end{align*}
\item Sufficiency is as in the previous case; for necessity, let\linebreak $\left( f,\Phi \right) : (X,L,\digamma)\rightarrow (Y,M, \digamma^{'})$ be a final morphism in $\mathbf{C-FFIL}$. Since  $\left( f,\Phi \right)$ is assumed to be a fuzzy filter continuous morphisms; and so by (3) we have that $\digamma^{'}\leqslant \digamma_{(f,\Phi)}$. Using  (5) and (2), and  as a consequence of the  finality we can conclude that\linebreak  $\left( id_Y,id_M \right): (Y,M, \digamma^{'})\rightarrow (Y, M,\digamma_{(f,\Phi)})$ is a a fuzzy filter continuous morphisms, which implies that
\begin{align*}
\digamma_{(f,\Phi)}&= id^{op}\circ \digamma_{(f,\Phi)} \\
& \leqslant \digamma^{'}\circ(id_Y,id_M)^{\leftarrow} \\
&=\digamma^{'}.
\end{align*}
Hence $\digamma^{'}=\digamma_{(f,\Phi)}. $
\end{enumerate}
This completes the proof of the proposition.
\end{proof}
\begin{theorem}[Initial fuzzy filter]
Let\ $\mathbf{C}$ be a subcategory of\,\ $\mathbf{LOQML}$, let  $\left( f,\Phi \right) : (X,L)\rightarrow (Y,M)$ be a   morphism in $\mathbf{SET\times C}$, where $f:X\rightarrow Y$ is an onto map, \ and let \ $\digamma^{'}: M^Y\rightarrow M $ be a fuzzy filter on $(Y,M)$, then
\[
\digamma \equiv \Phi^{op}\circ \digamma^{'} \circ \left( f,\Phi \right)^{\rightarrow}: L^X\rightarrow L
\]
is a fuzzy filter on $(X,L)$.
\end{theorem}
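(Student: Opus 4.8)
The plan is to verify, one at a time, the four defining conditions (i)--(iv) of a fuzzy filter for the composite $\digamma=\Phi^{op}\circ\digamma^{'}\circ(f,\Phi)^{\rightarrow}$, pushing each condition successively through the three stages $(f,\Phi)^{\rightarrow}$, then $\digamma^{'}$, then $\Phi^{op}$. At each stage I would invoke: the corresponding axiom of the given fuzzy filter $\digamma^{'}$; the fact that $\Phi^{op}\colon M\to L$, being (the opposite of) a $\mathbf{C}$-morphism, is a morphism of complete quasi-monoidal lattices, hence preserves $\top$, preserves arbitrary joins (so it preserves $\bot$ and is isotone), and is sub-multiplicative, $\Phi^{op}(p)\otimes\Phi^{op}(q)\leqslant\Phi^{op}(p\otimes q)$; and the matching behaviour of the forward operator $(f,\Phi)^{\rightarrow}$. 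The preliminary work is therefore to isolate the four properties of $(f,\Phi)^{\rightarrow}$ that mirror the filter axioms.

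For the two boundary conditions I would first compute $(f,\Phi)^{\rightarrow}$ on the constants $1_X$ and $0_X$. Here surjectivity of $f$ enters: since $f_L^{\rightarrow}(1_X)(y)=\bigvee_{f(x)=y}\top$ is a non-empty join, it equals $\top_L$ for every $y$, so $f_L^{\rightarrow}(1_X)$ is the constant map $\top_L$; reading off the infimum in the definition of $(f,\Phi)^{\rightarrow}$ then yields $(f,\Phi)^{\rightarrow}(1_X)=1_Y$, and dually $(f,\Phi)^{\rightarrow}(0_X)=0_Y$. Granting these, condition (i) follows from $\digamma(1_X)=\Phi^{op}(\digamma^{'}(1_Y))=\Phi^{op}(\top)=\top$, and condition (ii) from $\digamma(0_X)=\Phi^{op}(\digamma^{'}(0_Y))=\Phi^{op}(\bot)=\bot$. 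The delicate point to watch is the passage from $f_L^{\rightarrow}(1_X)=\top_L$ to $(f,\Phi)^{\rightarrow}(1_X)=1_Y$: the infimum $\bigwedge\{\beta\in M\mid\top_L\leqslant\Phi^{op}(\beta)\}$ must be shown to equal $\top_M$, which is where the structure of the $\mathbf{C}$-morphism $\Phi$ at the top is used.

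The monotonicity condition (iii) is the easiest: for $a\leqslant b$ in $L^X$ one has $(f,\Phi)^{\rightarrow}(a)\leqslant(f,\Phi)^{\rightarrow}(b)$ (immediate from the infimum definition, or because $(f,\Phi)^{\rightarrow}$ is left adjoint to $(f,\Phi)^{\leftarrow}$), whence $\digamma(a)\leqslant\digamma(b)$ by the isotonicity of $\digamma^{'}$ and of $\Phi^{op}$.

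The main obstacle is condition (iv). The plan is the chain
\[
\digamma(a)\otimes\digamma(b)\leqslant\Phi^{op}\!\left(\digamma^{'}((f,\Phi)^{\rightarrow}(a))\otimes\digamma^{'}((f,\Phi)^{\rightarrow}(b))\right)\leqslant\Phi^{op}\!\left(\digamma^{'}\big((f,\Phi)^{\rightarrow}(a)\otimes(f,\Phi)^{\rightarrow}(b)\big)\right),
\]
where the first inequality is sub-multiplicativity of $\Phi^{op}$ and the second is axiom (iv) of $\digamma^{'}$ followed by monotonicity of $\Phi^{op}$. To close the argument I need the lax-monoidality of the forward operator,
\[
(f,\Phi)^{\rightarrow}(a)\otimes(f,\Phi)^{\rightarrow}(b)\leqslant(f,\Phi)^{\rightarrow}(a\otimes b),
\]
after which monotonicity of $\digamma^{'}$ and $\Phi^{op}$ deliver $\digamma(a)\otimes\digamma(b)\leqslant\digamma(a\otimes b)$. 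I expect this last inequality to be the hard part: unwinding it through the infimum definition (equivalently, when $\Phi^{op}$ preserves meets, through the co-adjoint presentation $(f,\Phi)^{\rightarrow}={}^{*}\Phi\circ f_L^{\rightarrow}$) reduces it to comparing ${}^{*}\Phi(f_L^{\rightarrow}(a)(y))\otimes{}^{*}\Phi(f_L^{\rightarrow}(b)(y))$ with ${}^{*}\Phi\big(\bigvee_{f(x)=y}a(x)\otimes b(x)\big)$, that is, to the interaction of $\otimes$ with the joins defining $f_L^{\rightarrow}$ and with the co-adjoint ${}^{*}\Phi$ of $\Phi^{op}$. Since in a complete quasi-monoidal lattice $\otimes$ is required only to be isotone, this is precisely the step that must be handled with care.
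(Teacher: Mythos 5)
Your plan follows the same route as the paper's proof: verify the four filter axioms one stage at a time, use surjectivity of $f$ to evaluate the forward operator on the constants, and push everything through the co-adjoint description $(f,\Phi)^{\rightarrow}={}^{*}\Phi\circ f_L^{\rightarrow}$. But as written it is a programme rather than a proof: the two claims that carry all the weight are exactly the ones you defer, and neither is routine.

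First, you need $(f,\Phi)^{\rightarrow}(1_X)=1_Y$, i.e. ${}^{*}\Phi(\top_L)=\top_M$. You correctly flag this as the delicate point, but you do not resolve it: $\top_M$ lies in $\{\beta\in M\mid \top_L\leqslant\Phi^{op}(\beta)\}$ because $\Phi^{op}$ preserves $\top$, yet nothing prevents a strictly smaller $\beta$ from also satisfying $\Phi^{op}(\beta)=\top_L$, in which case the meet drops below $\top_M$ and $\digamma'$ applied to the resulting constant map need not equal $\top$. Second, and more seriously, axiom (iv) hinges on the lax monoidality $(f,\Phi)^{\rightarrow}(a)\otimes(f,\Phi)^{\rightarrow}(b)\leqslant(f,\Phi)^{\rightarrow}(a\otimes b)$, which you name as ``the hard part'' and leave open. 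Unwound, it requires both
\[
\Bigl(\bigvee_{f(x)=y}a(x)\Bigr)\otimes\Bigl(\bigvee_{f(x)=y}b(x)\Bigr)\leqslant\bigvee_{f(x)=y}\bigl(a(x)\otimes b(x)\bigr)
\]
and ${}^{*}\Phi(p)\otimes{}^{*}\Phi(q)\leqslant{}^{*}\Phi(p\otimes q)$. The displayed inequality is the \emph{reverse} of what isotonicity of $\otimes$ gives and genuinely needs $\otimes$ to distribute over the joins along the fibre (the paper's own proof simply asserts it as an equality, which is precisely the step your caution is aimed at); the second needs a compatibility of the co-adjoint ${}^{*}\Phi$ with $\otimes$ that you invoke for $\Phi^{op}$ but never establish for ${}^{*}\Phi$. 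Until these two inequalities are actually proved (or additional hypotheses on $L$, $M$ and $\Phi$ are imposed), conditions (i) and (iv) are not verified, so the argument is incomplete at exactly the points where the theorem could fail.
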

\begin{proof} In fact, we have
\begin{itemize}
\item[i).] 
\begin{align*}
\digamma(1_X)&= \Phi^{op}\left[ \digamma^{'}\left( ^{*}\Phi \circ f_{L^{\rightarrow}}(1_{(X,L)}) \right)\right]\\
&=\Phi^{op}\left[ \digamma^{'}\left( ^{*}\Phi (1_{(Y,L)}) \right)\right]\\
&=\Phi^{op}\left[ \digamma^{'}(1_{(Y,M)})\right]=\Phi^{op}(\top)=\top.
\end{align*}
\item[ii).] 
\begin{align*}
\digamma(1_X)&= \Phi^{op}\left[ \digamma^{'}\left( ^{*}\Phi \circ f_L^{\rightarrow}(0_{(X,L)}) \right)\right]\\
&=\Phi^{op}\left[ \digamma^{'}\left( ^{*}\Phi (0_{(Y,L)}) \right)\right]\\
&=\Phi^{op}\left[ \digamma^{'}(0_{(Y,M)})\right]=\Phi^{op}(\bot)=\bot.
\end{align*}
\item[iii).] Let $h_1\leqslant h_2$, the goal is to show that $\digamma(h_1)\leqslant\digamma(h_2)$. Since for $y\in Y$ we have
\begin{align*}
(^{*}\Phi \circ\left( f_L^{\rightarrow}(h_1))\right)(y)&=  ^{*}\Phi (\bigvee_{f(x)=y} h_1(x))\\
&=\bigwedge\left\{m\in M \mid\bigvee_{f(x)=y} h_1(x)\leqslant\Phi^{op}(m)\right\}\\
&\leqslant\bigwedge\left\{m\in M \mid\bigvee_{f(x)=y} h_2(x)\leqslant\Phi^{op}(m)\right\}\\
&=  ^{*}\Phi (\bigvee_{f(x)=y} h_2(x))=(^{*}\Phi \left(\circ f_L^{\rightarrow}(h_2))\right)(y)
\end{align*}
therefore $^{*}\Phi \circ \left(f_L^{\rightarrow}(h_1)\right)\leqslant ^{*}\Phi \circ \left(f_L^{\rightarrow}(h_2)\right)$. Hence, since $\digamma^{'}$ and  $ \Phi^{op}$ are isotone, we have
\begin{align*}
\digamma(h_1)&= \Phi^{op}\left[ \digamma^{'}\left( ^{*}\Phi \circ\left( f_L^{\rightarrow}(h_1)\right) \right)\right]\\
&\leqslant \Phi^{op}\left[ \digamma^{'}\left( ^{*}\Phi \circ \left(f_L^{\rightarrow}(h_2)\right) \right)\right]\\
&=\digamma(h_2).
\end{align*}
\item[iv).] Let $h$ and $k$ be two elements of $L^X$, in order to show that \linebreak $\digamma(h)\otimes \digamma(k)\leqslant \digamma(h\otimes k)$ we proceed as follows:\newline
Firstly, for each $y\in Y$,
\begin{align*}
\left[f_L^{\rightarrow}(h\otimes k)\right](y)&=\bigvee_{f(x)=y}(h\otimes k)(x)\\
&=\bigvee_{f(x)=y}h(x)\otimes k(x)\\
&=\bigvee_{f(x)=y}h(x)\otimes\bigvee_{f(x)=y} k(x)\\
&=(f_L^{\rightarrow}(h))(y)\otimes (f_L^{\rightarrow}(h))(y); 
\end{align*}
consequently, $f_L^{\rightarrow}(h\otimes k)=f_L^{\rightarrow}(h)\otimes f_L^{\rightarrow}(k)$; and so
\[
^{*}\Phi \circ\left( f_L^{\rightarrow}(h\otimes k) \right)= \left( ^{*}\Phi \circ f_L^{\rightarrow}(h) \right)\otimes \left( ^{*}\Phi \circ f_L^{\rightarrow}(k) \right).
\]
Finally, since $\digamma^{'}$ is a fuzzy filter,
\begin{align*}
&\digamma^{'}\left[\left( ^{*}\Phi \circ f_L^{\rightarrow}(h) \right)\right]\otimes \digamma^{'}\left[\left( ^{*}\Phi \circ f_L^{\rightarrow}(k) \right)\right]\\
&\leqslant \digamma^{'}\left[\left( ^{*}\Phi \circ f_L^{\rightarrow}(h) \right)\otimes \left( ^{*}\Phi \circ f_L^{\rightarrow}(k) \right)\right]\\
&=\digamma^{'}\left[ ^{*}\Phi \circ\left( f_L^{\rightarrow}(h\otimes k) \right)\right].\\
\end{align*}
Therefore
\begin{align*}
\digamma(h)\otimes \digamma(k)&=\Phi^{op}\left[ \digamma^{'}\left( ^{*}\Phi \circ f_L^{\rightarrow}(h) \right)\right]\otimes 
\Phi^{op}\left[ \digamma^{'}\left( ^{*}\Phi \circ f_L^{\rightarrow}(k) \right)\right]\\
&= \Phi^{op}\left[ \digamma^{'}\left( ^{*}\Phi \circ f_L^{\rightarrow}(h) \right)\otimes 
\digamma^{'}\left( ^{*}\Phi \circ f_L^{\rightarrow}(k) \right)\right]\\
&\leqslant \Phi^{op}\left[\digamma^{'}\left[ ^{*}\Phi \circ\left( f_L^{\rightarrow}(h\otimes k) \right)\right] \right]\\
&=\digamma(h\otimes k)
\end{align*}
\end{itemize}
\end{proof}

\subsection{Fuzzy ultrafilters}

Let $\mathfrak F_{FF}(X,L)$ be the set of all fuzzy filters on $(X,L)$. On  $\mathfrak F_{FF}(X,L)$ we introduce a partial ordering \ $\curlyeqprec$\ by
\[
\mathcal F_1\curlyeqprec \mathcal F_2 \Leftrightarrow \mathcal F_1(f)\leqslant \mathcal F_2(f),\,\,\,\,\ \forall (f)\in L^X
\]
\begin{$^{*}$proposition}
The partially ordered set $(\mathfrak F_{FF}(X,L),\curlyeqprec)$ has maximal elements.
\end{$^{*}$proposition}
\begin{proof}
Referring to Zorn's lemma, it is sufficient to show  that every chain $\mathcal C$ in $\mathfrak F_{FF}(X,L)$ has an upper bound in $\mathfrak F_{FF}(X,L)$. For this purpose let us consider a non-empty chain $\mathcal C=\{\mathcal F_{\lambda}\mid \lambda \in I\} $. We define a map\linebreak $\mathcal F_{\infty}: L^X\rightarrow L$ by
\[
\mathcal F_{\infty}(f)=\bigvee_{\lambda\in I}\mathcal F_{\lambda}(f),
\]
and we show that $\mathcal F_{\infty}$ is a fuzzy filter on $(X,L)$. In fact
\begin{enumerate}
\item[(FF1.b.i)] $\mathcal F_{\infty}(1_X)=\bigvee_{\lambda\in I}\mathcal F_{\lambda}(1_X)=\bigvee_{\lambda\in I}\top=\top$.
\item[(FF1.b.ii)]  $\mathcal F_{\infty}(0_X)=\bigvee_{\lambda\in I}\mathcal F_{\lambda}(0_X)=\bigvee_{\lambda\in I}\bot=\bot$.
\item[(FF1.b.iii)] $f\leqslant g\Rightarrow  \mathcal F_{\infty}(f)=\bigvee_{\lambda\in I}\mathcal F_{\lambda}(f)\leqslant\bigvee_{\lambda\in I}\mathcal F_{\lambda}(g) =\mathcal F_{\infty}(g)$.
\item[(FF1.b.iv)]
\begin{align*}
\mathcal F_{\infty}(f)\otimes\mathcal F_{\infty}(g)&= \left(\bigvee_{\lambda\in I}\mathcal F_{\lambda}(f)\right)\otimes \left(\bigvee_{\lambda\in I}\mathcal F_{\lambda}(g)\right)\\
&=\bigvee_{\lambda\in I}\left[\mathcal F_{\lambda}(f)\otimes\mathcal F_{\lambda}(g)\right]\\
&\leqslant \bigvee_{\lambda\in I}\left[\mathcal F_{\lambda}(f\otimes g)\right]\\
&=\mathcal F_{\infty}(f\otimes g).
\end{align*}
\end{enumerate}
\end{proof}
\begin{definition}
A maximal element in $(\mathfrak F_{FF}(X),\curlyeqprec)$ is also called a fuzzy ultrafilter.
\end{definition}

\begin{$^{*}$proposition}\label{UF}
For every  fuzzy filter $\mathcal U:  L^X\rightarrow L$  on $X$ the following assertions are equivalent
\begin{enumerate}
\item[(i)] $\mathcal U$ is a fuzzy ultrafilter.
\item[(ii)] $\mathcal U(f)=\left[\mathcal U(f\boldsymbol{\impl}0_X)\right]\impl\bot,\,\ \text{for all}\,\ f\in L^X. $
\end{enumerate}
\end{$^{*}$proposition}
\begin{proof} $(i)\Rightarrow(ii)$\newline
Because of $(FF1.b.iii)$ and $(FF1.b.iv)$  every fuzzy filter satisfies the condition
\begin{enumerate}
\item[(FF1.b.ii')]  $\mathcal U(f)\leqslant\left[\mathcal U\left(f\boldsymbol{\impl} 0_X\right)\right]\impl\bot,\,\ \text{for all}\,\ f\in L^X.$
\end{enumerate}
In order to verify $(i)\Rightarrow (ii)$ it is sufficient to show that the maximality of $\mathcal U$ implies
\[
\left[\mathcal U\left(f\boldsymbol{\impl} 0_X\right)\right]\impl\bot\leqslant \mathcal U(f),\,\ \forall f\in L^X.
\]
For this purpose, we fix an element $g\in L^X$, for that element we let\linebreak  $\mathcal G_{g}:= \left[\mathcal U\left(g\boldsymbol{\impl} 0_X\right)\right]\impl\bot$ and define a map\ $\hat{\mathcal U}:  L^X\rightarrow L$ \ by
\[
\hat{\mathcal U}(f)=\mathcal U(f)\bigvee\Bigl\{\mathcal U\left(g\boldsymbol{\impl} f\right)\otimes \mathcal G_{g}\Bigr\}.
\]
We must show that $\hat{\mathcal U}$ is a fuzzy ultrafilter.
Firstly $\hat{\mathcal U}$ is a fuzzy filter: obviously
$\hat{\mathcal U}$ satisfies $(FF´1.b.i)$. \\

In order to verify $(FF1.b.ii)$, we have that
\begin{align*}
\hat{\mathcal U}(0_X)&=\mathcal U(0_X)\bigvee\Bigl\{\mathcal U\left(g\boldsymbol{\impl} 0_X\right)\otimes \mathcal G_{g}\Bigr\}\\
&=\bot\lor\Bigl\{\mathcal U\left(g\boldsymbol{\impl} 0_X\right)\otimes \mathcal G_{g}\Bigr\}\\
&=\mathcal U\left(g\boldsymbol{\impl} 0_X\right)\otimes \mathcal G_{g}.\\
\end{align*}
Now we invoke the residuation property of $(L,\leqslant,\otimes)$ to obtain
\[
\hat{\mathcal U}(0_X)=\Bigl\{\mathcal U\left(g\boldsymbol{\impl} 0_X,\right)\otimes \mathcal G_{g}\Bigr\}=\bot.
\]

For the axiom $(FF1.b.iii)$, from
the definition
\[
\hat{\mathcal U}(f)=\mathcal U(f)\bigvee\Bigl\{\mathcal U\left(g\boldsymbol{\impl} f\right)\otimes \mathcal G_{g}\Bigr\}
\]
and
\[
\hat{\mathcal U}(h)=\mathcal U(h)\bigvee\Bigl\{\mathcal U\left(g\boldsymbol{\impl} h\right)\otimes \mathcal G_{g}\Bigr\}.
\]
Now, for $f\leqslant h$ we have that $\mathcal U(f)\leqslant\mathcal U(h)$,
moreover,
\begin{align*}
g\boldsymbol{\impl} f &=\bigvee\bigl\{k\in L^X\mid g\otimes k\curlyeqprec f\bigr\}\\
&\leqslant \bigvee\bigl\{k\in L^X\mid g\otimes k\curlyeqprec h\bigr\}\\
&=g\boldsymbol{\impl} h,
\end{align*}
which implies that
\begin{align*}
\hat{\mathcal U}(f)&=\mathcal U(f)\bigvee\Bigl\{\mathcal U\left(g\boldsymbol{\impl} f\right)\otimes \mathcal G_{g}\Bigr\}\\
&\leqslant\mathcal U(h)\bigvee\Bigl\{\mathcal U\left(g\boldsymbol{\impl} h\right)\otimes \mathcal G_{g}\Bigr\}=\hat{\mathcal U}(h).
\end{align*}
For the axiom $(FF1.b.iv)$, we must verify that
\[
\hat{\mathcal U}(f)\otimes \hat{\mathcal U}(h)\leqslant \hat{\mathcal U}(f\otimes h),
\]
In fact,
\noindent
\begin{align*}
&\hat{\mathcal U}(f)\otimes \hat{\mathcal U}(h)\\
&=\bigl(\mathcal U(f)\bigvee\bigl\{\mathcal U\left(g\boldsymbol{\impl} f\right)\otimes \mathcal G_{g}\bigr\}\bigr)
\otimes \bigl(\mathcal U(h)\bigvee\bigl\{\mathcal U\left(g\boldsymbol{\impl} h\right)\otimes \mathcal G_{g}\bigr\}\bigr)\\
&=\mathcal U(f)\otimes\left[\mathcal U(h)\bigvee\bigl\{\mathcal U\left(g\boldsymbol{\impl} h\right)\otimes \mathcal G_{g}\bigr\}\right]\\
&\bigvee\left[\bigl\{\mathcal U\left(g\boldsymbol{\impl} f\right)\otimes \mathcal G_{g}\bigr\}
\otimes\bigl\{\mathcal U(h)\bigvee\bigl\{\mathcal U\left(g\boldsymbol{\impl} h\right)\otimes \mathcal G_{g}\bigr\}\bigr\}\right]\\
&=\mathcal U(f)\otimes \mathcal U(h)\bigvee\left[\mathcal U(f)\otimes\bigl\{\mathcal U\left(g\boldsymbol{\impl} h\right)\otimes \mathcal G_{g}\bigr\}\right]\\
&\bigvee\left(\bigl\{\mathcal U\left(g\boldsymbol{\impl} f\right)\otimes \mathcal G_{g} \bigr\}\otimes \mathcal U(h)\right)
\bigvee \left(\bigl\{ \mathcal U\left(g\boldsymbol{\impl} f\right)\otimes \mathcal G_{g} \bigr\}
\otimes \bigl\{\mathcal U\left(g\boldsymbol{\impl} h\right)\otimes \mathcal G_{g}\bigr\}\right)\\
&=\mathcal U(f)\otimes \mathcal U(h)\bigvee\left[\bigl\{\mathcal U(f)\otimes\mathcal U\left(g\boldsymbol{\impl} h\right)\bigr\}\otimes \mathcal G_{g}\right]\\
&\bigvee\left(\bigl\{\mathcal U(h)\otimes\mathcal U\left(g\boldsymbol{\impl} f\right)\bigr\} \otimes \mathcal G_{g} \right)
\bigvee \left(\bigl\{ \mathcal U\left(g\boldsymbol{\impl} f\right) \otimes \mathcal U\left(g\boldsymbol{\impl} h\right)\bigr\}\otimes \mathcal G_{g}\right)\\
&\leqslant \mathcal U(f\otimes h)\bigvee\left[\mathcal U\left(f\otimes\left[g\boldsymbol{\impl} h\right] \right)\otimes\mathcal G_{g}\right]\\
&\bigvee\left(\mathcal U\left(h\otimes\left[g\boldsymbol{\impl} f\right]\right)\otimes\mathcal G_{g}\right)
\bigvee\left(\mathcal U\left(\left(g,\boldsymbol{\impl} f\right)\otimes \left[g\boldsymbol{\impl} h\right]\right)\otimes\mathcal G_{g}\right)\\
&\leqslant\mathcal U(f\otimes h)\bigvee\left[\mathcal U\left(g\boldsymbol{\impl}f\otimes h\right)\otimes\mathcal G_{g}\right]\\
&\bigvee\left[\mathcal U\left(g\boldsymbol{\impl}f\otimes h\right)\otimes\mathcal G_{g}\right]
\bigvee\left[\mathcal U\left(g\boldsymbol{\impl}f\otimes h\right)\otimes\mathcal G_{g}\right]\\
&=\mathcal U(f\otimes h)\bigvee\left[\mathcal U\left(g\boldsymbol{\impl}f\otimes h\right)\otimes\mathcal G_{g}\right]\\
&=\hat{\mathcal U}(f\otimes h).
\end{align*}
\vspace{0.5cm}
Now we must show that $\hat{\mathcal U}$ is a fuzzy ultrafilter on $X$. In fact, since
\[
\hat{\mathcal U}(f)=\mathcal U(f)\bigvee\Bigl\{\mathcal U\left(g\boldsymbol{\impl} f\right)\otimes \mathcal G_{g}\Bigr\},
\]
clearly $\mathcal U(f)\leqslant\hat{\mathcal U}(f),\,\,\,\ \forall f\in L^X$, but $\mathcal U$ is a fuzzy ultrafilter on $X$, therefore $\hat{\mathcal U}=\mathcal U$.
In this way
\begin{align*}
\mathcal U(g)&=\mathcal U(g)\bigvee\Bigl\{\mathcal U\left(g\boldsymbol{\impl} g\right)\otimes \mathcal G_{g}\Bigr\}\\
&=\mathcal U(g)\bigvee\Bigl\{\mathcal U(1_X)\otimes \mathcal G_{g}\Bigr\}\\
&=\mathcal U(g)\bigvee\Bigl\{\top\otimes \mathcal G_{g}\Bigr\}\\
&=\mathcal U(g)\lor \mathcal G_{g}.\\
\end{align*}
Therefore,
\[
\mathcal G_{g}=\left[\mathcal U\left(g\boldsymbol{\impl} 0_X\right)\right]\impl\bot\leqslant \mathcal U(g),\,\,\,\ \forall g\in L^X.
\]
From the last inequality and $(FF1.b.ii')$ we obtain $(ii)$.\newline
$(ii)\Rightarrow(i)$\newline
We must verify that if
\[
\mathcal U(f)=\left(\mathcal U\left(f\boldsymbol{\impl}0_X\right)\right)\impl\bot,\,\ \text{for all}\,\ f\in L^X,
\]
then $\mathcal U$ is a fuzzy ultrafilter on $X$.\newline
Suppose  $\mathcal U\leqslant\hat{\mathcal U}$, then
\[
\left(\left[\hat{\mathcal U}\left(f\boldsymbol{\impl}0_X\right)\right]\impl\bot\right)\leqslant \left(\left[\mathcal U\left(f\boldsymbol{\impl}0_X\right)\right]\impl\bot\right),
\]
therefore $\hat{\mathcal U}\leqslant \mathcal U$, consequently $\mathcal U$ is an $L$-fuzzy ultrafilter on $X$.
\end{proof}
\begin{$^{*}$proposition}\label{di} 
Let $\phi: X\rightarrow Y$ be a map and let $\mathcal F:  L^X\rightarrow L$ be  a fuzzy filter on $X$. Then
the map $\phi_{\mathcal U}^{\rightarrow}: L^Y\rightarrow L$ is a fuzzy ultrafilter on $Y$, whenever $\mathcal U$ will be a fuzzy ultrafilter on $X$
\end{$^{*}$proposition}
\begin{proof}
 Let $\mathcal U:  L^X\rightarrow L$ be  a fuzzy ultrafilter on $X$ and let $g\in L^Y$, then
\begin{align*}
\phi_{\mathcal U}^{\rightarrow}(g)&= \mathcal U(g\circ\phi)\\
&= \mathcal U\left((g\circ\phi)\boldsymbol{\impl} 0_X\right)\impl \bot\\
&= \mathcal U[(g\circ\phi)\impl (0_Y\circ \phi)]\impl \bot\\
&= \mathcal U[(g\impl 0_Y)\circ \phi]\impl \bot\\
&= \phi_{\mathcal U}^{\rightarrow}[g\impl 0_Y]\impl \bot.
\end{align*}
We conclude from proposition \ref{UF} that $\phi_{\mathcal U}^{\rightarrow}: L^Y\times L\rightarrow L$ is an $L$-fuzzy ultrafilter on $Y$.
\end{proof}

\section{The category $\mathbf{C-FTOP}$} 
In this section we transcribe some facts about categorical topology, taken from  \cite{RO}, in order to establish (in the next section) a relationship between this category and the category  $\mathbf{C-FFIL}$.  
\begin{definition}[ The category $\mathbf{C-FTOP}$]\label{c-ftop} 
Let $\mathbf{C}$ be a  subcategory of\linebreak $\mathbf{LOQML}$. The category $\mathbf{C-FTOP}$ comprises the following data:
\begin{enumerate}
\item[(CF1)]{\bf  Objects.} Objects are ordered triples $(X,L,\Upsilon)$ satisfying the following  axioms:
\begin{enumerate}
\item {\bf Ground axiom.} $(X,L) \in |\mathbf{SET\times C}|$, 
\item {\bf Fuzzy topological  axiom.} $\Upsilon: L^X\rightarrow L$ is a mapping satisfaying:
\begin{enumerate}
\item For all set of index $J$, para todo $\{ f_{\lambda}\mid \lambda \in J\}\subseteq L^X$, 
\[
\bigwedge_{\lambda \in J} \Upsilon(f_{\lambda})\leqslant \Upsilon\left(\bigvee_{\lambda \in J}f_{\lambda}\right)
\]
\item $\Upsilon(f)\otimes \Upsilon(g)\leqslant \Upsilon(f\otimes g)$,\,\,\ $\forall \ f, g \in L^X$
\item $\Upsilon(1_X)=\top$.
\end{enumerate}
\item {\bf  Equality of objects.} $(X,L,\Upsilon)=(Y, M, \Gamma)$ iff $(X,L)=(Y, M)$ in $\mathbf{SET\times C}$ and $\Upsilon = \Gamma$ as $SET$ mappings from $L^X\equiv M^Y$ to $L\equiv M$.
\end{enumerate}
\item[(CF2)]{\bf Morphisms.} Morphisms are  ordered pairs  
\[
\left( f,\Phi \right) : (X,L,\Upsilon)\rightarrow (Y,M,\Gamma)
\]
called {\bf fuzzy continuous morphisms}, satisfying the following axioms: 
\begin{enumerate}
\item {\bf Ground axiom.} $\left( f,\Phi \right) : (X,L)\rightarrow (Y,M)$ is a morphism in $\mathbf{SET\times C}$
\item {\bf fuzzy continuity axiom} $\Phi^{op} \circ \Gamma \leqslant \Upsilon\circ(f,\Phi)^{\leftarrow}$ on $M^Y$.
\item{\bf Equality of morphisms.} As in $\mathbf{SET\times C}$.
\end{enumerate}
\item[(CF3)] {\bf Composition.} As in $\mathbf{SET\times C}$.
\item[(CF4)] {\bf Identities.}  As in $\mathbf{SET\times C}$.
\end{enumerate}
The ordered triple $(X,L,\Upsilon)$ is a {\bf fuzzy topological space} on the ground set $(X,L)$ .
\end{definition}
\begin{proposition} [Alternate fuzzy continuity axiom] \cite{RO}\ 
 On $M^Y$ the following holds:
\[
\Phi^{op} \circ \Gamma \leqslant \Upsilon\circ(f,\Phi)^{\leftarrow}\,\,\ \text{ if and only if}\,\,\,\ \Gamma \leqslant (\Phi^{op})^{*}\circ \Upsilon\circ (f,\Phi)^{\leftarrow},
\]
where $(\Phi^{op})^{*}$ is the right adjoint of $\Phi^{op}$.
\end{proposition}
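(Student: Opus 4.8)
The plan is to exploit the Galois adjunction between $\Phi^{op}$ and its right adjoint $(\Phi^{op})^{*}$ in exactly the manner used to establish the analogous alternate axiom for fuzzy filter continuous morphisms earlier in the paper. Indeed, the two statements have identical logical shape, and the proof uses nothing about $\Upsilon$ or $\Gamma$ beyond the fact that both sides of each inequality are $\mathbf{SET}$-mappings into the lattice $L$; so the argument transfers verbatim from the filter case, with $\Upsilon$ in place of $\digamma$ and $\Gamma$ in place of $\varOmega$. The three facts I would invoke are that $(\Phi^{op})^{*}$ being the right adjoint of $\Phi^{op}$ (i.e. $\Phi^{op}\dashv(\Phi^{op})^{*}$) forces both maps to be monotone, yields the counit inequality $\Phi^{op}\circ(\Phi^{op})^{*}\leqslant id_L$, and yields the unit inequality $id_M\leqslant(\Phi^{op})^{*}\circ\Phi^{op}$.

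For the forward implication I would assume $\Phi^{op}\circ\Gamma\leqslant\Upsilon\circ(f,\Phi)^{\leftarrow}$ and apply the monotone map $(\Phi^{op})^{*}$ to both sides, obtaining $(\Phi^{op})^{*}\circ\Phi^{op}\circ\Gamma\leqslant(\Phi^{op})^{*}\circ\Upsilon\circ(f,\Phi)^{\leftarrow}$. The desired conclusion then follows by prefixing the unit inequality $\Gamma\leqslant(\Phi^{op})^{*}\circ\Phi^{op}\circ\Gamma$ to the left-hand side and chaining the two estimates.

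For the converse I would assume $\Gamma\leqslant(\Phi^{op})^{*}\circ\Upsilon\circ(f,\Phi)^{\leftarrow}$ and apply the monotone map $\Phi^{op}$ to both sides, giving $\Phi^{op}\circ\Gamma\leqslant\Phi^{op}\circ(\Phi^{op})^{*}\circ\Upsilon\circ(f,\Phi)^{\leftarrow}$, and then collapse the leading composite $\Phi^{op}\circ(\Phi^{op})^{*}$ on the right using the counit inequality $\Phi^{op}\circ(\Phi^{op})^{*}\leqslant id_L$ to recover $\Phi^{op}\circ\Gamma\leqslant\Upsilon\circ(f,\Phi)^{\leftarrow}$.

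I do not expect any genuine obstacle here, since every manipulation is a formal, pointwise one in the complete lattice $L$. The only point demanding care is the orientation of the adjunction: the unit and counit inequalities must be used in the correct direction (and $(\Phi^{op})^{*}$ really must be the \emph{right} adjoint, so that meets are preserved on the appropriate side), because reversing either inequality would invalidate one of the two implications.
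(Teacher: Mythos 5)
Your proposal is correct and coincides with the paper's approach: the paper leaves this proposition unproved (citing \cite{RO}), but it proves the verbatim analogue for fuzzy filter continuous morphisms by exactly the argument you give, namely applying $(\Phi^{op})^{*}$ (resp.\ $\Phi^{op}$) to both sides and invoking the Galois correspondence $\left((\Phi^{op})^{*},\Phi^{op}\right)$. Your version merely makes the unit and counit inequalities explicit, which the paper's one-line proof leaves implicit.
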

\begin{theorem} [Final structures and morphisms for fuzzy topology] \cite{RO}\ 
Let $\mathbf{C}$ be a subcategory of $\mathbf{LOQML}$, let $\left( f,\Phi \right) : (X,L)\rightarrow (Y,M)$  in $\mathbf{SET\times C}$, let $\Phi^{*}\equiv (\Phi^{op})^{*}$ be the right  adjoint of $\Phi^{op}: L\leftarrow M$ and let $\Upsilon$ be a fuzzy topology on $(X,L)$. Then the following holds:
\begin{enumerate}
\item $\Upsilon_{(f,\Phi)}\equiv \Phi^{*}\circ \Upsilon\circ(f,\Phi)^{\leftarrow}: Y\rightarrow M$ is a fuzzy topology on $(Y,M)$;
\item $\left( f,\Phi \right) : (X,L,\Upsilon)\rightarrow (Y,M, \Upsilon_{(f,\Phi)})$ is fuzzy continuous;
\item $\left( f,\Phi \right) : (X,L,\Upsilon)\rightarrow (Y,M, \Upsilon^{'})$ is fuzzy continuous iff $\Upsilon^{'}\leqslant \Upsilon_{(f,\Phi)}$;
\item $\Upsilon_{(f,\Phi)}$ is the join of all the fuzzy topologies $\Upsilon^{'}$ on $M^Y$ for which \linebreak $\left( f,\Phi \right) : (X,L,\Upsilon)\rightarrow (Y,M, \Upsilon^{'})$ is fuzzy continuous;
\item $\left( f,\Phi \right) : (X,L,\Upsilon)\rightarrow (Y,M, \Upsilon_{(f,\Phi)})$ is a final morphism in $\mathbf{C-FTOP}$;
\item $\left( f,\Phi \right) : (X,L,\Upsilon)\rightarrow (Y,M, \Upsilon^{'})$ is a final morphism in  $\mathbf{C-FTOP}$
iff  $\Upsilon^{'}\leqslant \Upsilon_{(f,\Phi)}$.
\end{enumerate}
\end{theorem}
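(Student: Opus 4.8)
The plan is to mirror the proof of the analogue of this theorem for fuzzy filters (Theorem~2.5), substituting the fuzzy-topology axioms (CF1.b.i)--(CF1.b.iii) for the fuzzy-filter axioms wherever they are invoked. The algebraic backbone is the same: the Galois inequalities $\Phi^{op}\circ(\Phi^{op})^{*}\leqslant id_L$ and $id_M\leqslant(\Phi^{op})^{*}\circ\Phi^{op}$, the identity $(f,\Phi)^{\leftarrow}(b)=\Phi^{op}\circ b\circ f$, the fact that $(\Phi^{op})^{*}$, being a right adjoint, preserves arbitrary meets and the tensor $\otimes$, and the fact that $\Phi^{op}$, being a $\mathbf{CQML}$-morphism, preserves $\top$, $\otimes$ and arbitrary joins.

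First I would establish (1), that $\Upsilon_{(f,\Phi)}=(\Phi^{op})^{*}\circ\Upsilon\circ(f,\Phi)^{\leftarrow}$ is a fuzzy topology. The unit axiom (CF1.b.iii), $\Upsilon_{(f,\Phi)}(1_Y)=\top$, and the tensor axiom (CF1.b.ii) are verified by exactly the computations used for $\digamma_{(f,\Phi)}$ in Theorem~2.5(1): one pushes $(f,\Phi)^{\leftarrow}$ through the constant map and through $\otimes$ using that $\Phi^{op}$ preserves $\top$ and $\otimes$, and then uses $(\Phi^{op})^{*}(\top)=\top$ and that $(\Phi^{op})^{*}$ preserves $\otimes$. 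The one genuinely new verification is the arbitrary-join axiom (CF1.b.i). Given a family $\{h_\lambda\}_{\lambda\in J}\subseteq M^Y$, I would compute
\begin{align*}
\bigwedge_{\lambda\in J}\Upsilon_{(f,\Phi)}(h_\lambda)
&=(\Phi^{op})^{*}\Bigl(\bigwedge_{\lambda\in J}\Upsilon\bigl((f,\Phi)^{\leftarrow}(h_\lambda)\bigr)\Bigr)\\
&\leqslant(\Phi^{op})^{*}\Bigl(\Upsilon\bigl(\bigvee_{\lambda\in J}(f,\Phi)^{\leftarrow}(h_\lambda)\bigr)\Bigr)\\
&=(\Phi^{op})^{*}\Bigl(\Upsilon\bigl((f,\Phi)^{\leftarrow}(\bigvee_{\lambda\in J}h_\lambda)\bigr)\Bigr)
=\Upsilon_{(f,\Phi)}\Bigl(\bigvee_{\lambda\in J}h_\lambda\Bigr),
\end{align*}
where the first equality uses that the right adjoint $(\Phi^{op})^{*}$ preserves arbitrary meets, the middle inequality uses axiom (CF1.b.i) for $\Upsilon$ together with the isotonicity of $(\Phi^{op})^{*}$, and the penultimate equality uses that $(f,\Phi)^{\leftarrow}=\Phi^{op}\circ(-)\circ f$ preserves arbitrary joins (pointwise joins are preserved because $\Phi^{op}$ preserves joins).

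Next, part (2) follows at once from $\Phi^{op}\circ(\Phi^{op})^{*}\leqslant id_L$, which gives $\Phi^{op}\circ\Upsilon_{(f,\Phi)}\leqslant\Upsilon\circ(f,\Phi)^{\leftarrow}$, the fuzzy-continuity axiom. Part (3) is the adjunction argument of Theorem~2.5(3) verbatim: continuity $\Phi^{op}\circ\Upsilon'\leqslant\Upsilon\circ(f,\Phi)^{\leftarrow}$ yields $\Upsilon'\leqslant(\Phi^{op})^{*}\circ\Phi^{op}\circ\Upsilon'\leqslant\Upsilon_{(f,\Phi)}$, and conversely $\Upsilon'\leqslant\Upsilon_{(f,\Phi)}$ together with part (2) restores continuity after applying $\Phi^{op}$. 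Part (4) is then a formal consequence: by (3) the fuzzy topologies making $(f,\Phi)$ continuous are exactly those with $\Upsilon'\leqslant\Upsilon_{(f,\Phi)}$, and since $\Upsilon_{(f,\Phi)}$ itself lies in this set by (2), it is its largest element and hence its join. Finally, parts (5) and (6) reproduce Theorem~2.5(4)--(5): for the finality in (5) I would take any $(g,\Psi):(Y,M)\to(Z,N)$ and any $(Z,N,\Upsilon')$ with $(g,\Psi)\circ(f,\Phi)$ continuous, and apply $(\Phi^{op})^{*}$ to $\Phi^{op}\circ\Psi^{op}\circ\Upsilon'\leqslant\Upsilon\circ(f,\Phi)^{\leftarrow}\circ(g,\Psi)^{\leftarrow}$ to obtain $\Psi^{op}\circ\Upsilon'\leqslant\Upsilon_{(f,\Phi)}\circ(g,\Psi)^{\leftarrow}$, i.e.\ continuity of $(g,\Psi)$ out of $(Y,M,\Upsilon_{(f,\Phi)})$; the converse direction of (6) runs as in Theorem~2.5(5), combining continuity (part 3), the canonical finality (part 5) and the assumed finality to force $\Upsilon'=\Upsilon_{(f,\Phi)}$.

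I expect the main obstacle to be precisely the arbitrary-join axiom in part (1): unlike the filter case, where monotonicity and the tensor inequality suffice, here I must know that the backward operator $(f,\Phi)^{\leftarrow}$ commutes with arbitrary joins. This is exactly where the hypothesis that $\Phi^{op}$ is a $\mathbf{CQML}$-morphism (hence join-preserving) is essential, and it is the only place where the structure of the second coordinate enters beyond what the filter argument already supplied.
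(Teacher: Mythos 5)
The paper itself gives no proof of this theorem; it is transcribed from \cite{RO}, so the only internal benchmark is the paper's proof of the fuzzy-filter analogue (the theorem on $\digamma_{(f,\Phi)}$ in Section 2), and your argument is precisely the correct adaptation of that proof. Parts (2)--(6) are the same Galois-adjunction manipulations, and you correctly identify and handle the one genuinely new step, the arbitrary-join axiom in part (1), by combining meet-preservation of the right adjoint $(\Phi^{op})^{*}$ with the fact that $\Phi^{op}$, as a $\mathbf{CQML}$-morphism, preserves arbitrary joins, so that $(f,\Phi)^{\leftarrow}=\Phi^{op}\circ(-)\circ f$ commutes with joins.
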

\begin{theorem} [$\mathbf{C-FTOP}$ is a topological category] \cite{RO}\ 
For each subcategory  $\mathbf{C}$   of $\mathbf{LOQML}$, the category  $\mathbf{C-FTOP}$ is topological over $\mathbf{SET\times C}$ with respect to the forgetful functor $V$.
\end{theorem}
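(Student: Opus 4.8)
The plan is to verify that the forgetful functor $V:\mathbf{C-FTOP}\rightarrow\mathbf{SET\times C}$ is a topological functor by means of the standard characterization: a faithful functor is topological precisely when every $V$-structured sink admits a unique $V$-final lift (equivalently, every structured source admits a unique initial lift). Faithfulness of $V$ is routine and is established exactly as in the concreteness argument given earlier for $\mathbf{C-FFIL}$. It therefore remains to produce, for each structured sink, a final fuzzy topology on the common ground set and to confirm its universal property; since the preceding theorem supplies the single-morphism final structure $\Upsilon_{(f,\Phi)}=(\Phi^{op})^{*}\circ\Upsilon\circ(f,\Phi)^{\leftarrow}$, I would route the whole argument through that machinery.

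First I would record that the fibre of $V$ over a ground set $(Y,M)$ is a complete lattice under the pointwise order $\Upsilon\leqslant\Gamma$. The key fact is that an arbitrary pointwise meet $\bigwedge_{\alpha}\Upsilon_{\alpha}$ of fuzzy topologies on $(Y,M)$ is again a fuzzy topology: the axiom $\Upsilon(1_X)=\top$ is immediate, the join axiom follows by interchanging the two meets, and the $\otimes$-axiom survives because $\otimes$ is isotone, so that $\left(\bigwedge_{\alpha}\Upsilon_{\alpha}(f)\right)\otimes\left(\bigwedge_{\alpha}\Upsilon_{\alpha}(g)\right)\leqslant\bigwedge_{\alpha}\bigl(\Upsilon_{\alpha}(f)\otimes\Upsilon_{\alpha}(g)\bigr)\leqslant\bigwedge_{\alpha}\Upsilon_{\alpha}(f\otimes g)$. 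Note that only monotonicity of $\otimes$ is needed here, not any meet-distributivity.

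Next, given a structured sink $\bigl((f_{\lambda},\Phi_{\lambda}):V(X_{\lambda},L_{\lambda},\Upsilon_{\lambda})\rightarrow(Y,M)\bigr)_{\lambda\in\Lambda}$, I would define the candidate final structure by $\Upsilon:=\bigwedge_{\lambda\in\Lambda}(\Upsilon_{\lambda})_{(f_{\lambda},\Phi_{\lambda})}$, each summand being the single-morphism final topology supplied by the previous theorem. By the meet-closure just established $\Upsilon$ is a fuzzy topology, and since $\Upsilon\leqslant(\Upsilon_{\lambda})_{(f_{\lambda},\Phi_{\lambda})}$ for every $\lambda$, part (3) of that theorem shows each leg $(f_{\lambda},\Phi_{\lambda}):(X_{\lambda},L_{\lambda},\Upsilon_{\lambda})\rightarrow(Y,M,\Upsilon)$ is fuzzy continuous.

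The hard part will be the universal property: for any $(Z,N,\Upsilon^{''})$ and any ground morphism $(g,\Psi):(Y,M)\rightarrow(Z,N)$, the composite $(g,\Psi)$ should be fuzzy continuous exactly when every $(g,\Psi)\circ(f_{\lambda},\Phi_{\lambda})$ is. The forward direction is closure under composition. For the converse I would reduce to the single-morphism case via the alternate continuity axiom, namely that $(g,\Psi)$ is continuous iff $\Upsilon^{''}\leqslant\Upsilon_{(g,\Psi)}$. The crucial observation is that the final-structure operation $(-)_{(g,\Psi)}=(\Psi^{op})^{*}\circ(-)\circ(g,\Psi)^{\leftarrow}$ commutes with arbitrary meets, because $(\Psi^{op})^{*}$ is a right adjoint and hence preserves meets while precomposition with $(g,\Psi)^{\leftarrow}$ acts pointwise; combined with the functoriality identity $(\Upsilon_{\lambda})_{(g\circ f_{\lambda},\Psi\circ\Phi_{\lambda})}=\bigl((\Upsilon_{\lambda})_{(f_{\lambda},\Phi_{\lambda})}\bigr)_{(g,\Psi)}$ this yields $\Upsilon_{(g,\Psi)}=\bigwedge_{\lambda}\bigl((\Upsilon_{\lambda})_{(f_{\lambda},\Phi_{\lambda})}\bigr)_{(g,\Psi)}=\bigwedge_{\lambda}(\Upsilon_{\lambda})_{(g\circ f_{\lambda},\Psi\circ\Phi_{\lambda})}$. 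Continuity of each composite gives $\Upsilon^{''}\leqslant(\Upsilon_{\lambda})_{(g\circ f_{\lambda},\Psi\circ\Phi_{\lambda})}$ for all $\lambda$, whence $\Upsilon^{''}\leqslant\Upsilon_{(g,\Psi)}$, so that $(g,\Psi)$ is continuous. Uniqueness of the lift follows from antisymmetry of the fibre order, so every structured sink has a unique final lift and $V$ is topological. I expect the meet-interchange step for the final-structure operation to be the only genuinely delicate point; the remaining verifications become formal once fibrewise completeness and the single-morphism theorem are in hand.
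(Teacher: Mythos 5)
The paper itself offers no proof of this theorem: it is quoted verbatim from Rodabaugh's \cite{RO} as background for Section 4, so there is no in-paper argument to compare yours against. Judged on its own terms, your proposal is correct and follows the standard route (essentially the one taken in \cite{RO}): reduce topologicity to unique final lifts of structured sinks, show the fibres are complete lattices under the pointwise order because arbitrary meets of fuzzy topologies are fuzzy topologies (your observation that only isotonicity of $\otimes$ is needed, not meet-distributivity, is the right one), take the candidate final structure to be $\bigwedge_{\lambda}(\Upsilon_{\lambda})_{(f_{\lambda},\Phi_{\lambda})}$, and verify the universal property by pushing everything through part (3) of the single-morphism final-structure theorem. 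The ``delicate'' step you flag does go through: $(\Psi^{op})^{*}$ preserves arbitrary meets because $\Psi^{op}$, being a $\mathbf{CQML}$-morphism, preserves arbitrary joins and hence has a genuine right adjoint, and precomposition with $(g,\Psi)^{\leftarrow}$ commutes with pointwise meets; together with $(g\circ f_{\lambda},\Psi\circ\Phi_{\lambda})^{\leftarrow}=(f_{\lambda},\Phi_{\lambda})^{\leftarrow}\circ(g,\Psi)^{\leftarrow}$ and $\left(\Phi_{\lambda}^{op}\circ\Psi^{op}\right)^{*}=(\Psi^{op})^{*}\circ(\Phi_{\lambda}^{op})^{*}$ this gives your meet-interchange identity. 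Two details worth making explicit in a final write-up: the empty sink must also have a final lift, which your construction handles since the empty meet is the constantly-$\top$ structure and every ground morphism out of it is trivially continuous; and the uniqueness of the lift rests on the ``equality of objects'' clause in Definition \ref{c-ftop} (amnesticity of $V$), which is what turns the mutual inequality $\Upsilon_1\leqslant\Upsilon_2\leqslant\Upsilon_1$ into actual equality of objects.
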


\section{From  $\mathbf{C-FFIL}$ to $\mathbf{C-FTOP}$}
There exists a natural relationship between the categories $\mathbf{C-FFIL}$ and  $\mathbf{C-FTOP}$. Our purpose in this section is to describe it, as a generalization of  \cite{LO}.\newline
If we compare the axiom $(CF1.b)$ from definition \ref{c-ftop} of fuzzy topology with axiom  $(FF1.b)$ from definition  \ref{c-ffil} of fuzzy filter, we can see that the condition $(ii)$ of the latter  implies condition $(i)$  of the former, in fact:\newline
Let $J\ne \emptyset$ be an index set and let   $\{ f_{\lambda}\mid \lambda \in J\}\subseteq L^X$, then  we have 
\[ f_{\lambda}\leqslant \bigvee_{\lambda \in J}f_{\lambda}\,\ \text{ for each $\lambda \in J$;}
\]
invoking $(CF1.b.iii)$, we get
\[\digamma( f_{\lambda})\leqslant \digamma (\bigvee_{\lambda \in J}f_{\lambda})\,\ \text{ for each $\lambda \in J$,}
\]
therefore
\[ 
\bigwedge_{\lambda \in J} \digamma (f_{\lambda})\leqslant \digamma\left(\bigvee_{\lambda \in J}f_{\lambda}\right).
\]
Moreover, if we change $(FF1.b.ii)$\ from the definition of fuzzy filter  by $\digamma (0_X)=\top $, it is obtained. 
\begin{proposition}[Fuzzy filtered-type topolgy] 
Let $\digamma: L^X\rightarrow L $ be a fuzzy filter on $(X,L)$. Then the mapping  
$\Upsilon_{\digamma}:L^X\rightarrow L $ defined , for each $g\in L^X$,  by
\[
\Upsilon_{\digamma}(g)=
\begin{cases}
\digamma(g)      &\text{if $g\ne 0_X$}\\
\top             &\text{if $g= 0_X$} 
\end{cases}
\]
is a fuzzy topology on $(X,L)$.
\end{proposition}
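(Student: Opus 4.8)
The plan is to verify directly the three fuzzy-topological axioms $(CF1.b.i)$, $(CF1.b.ii)$ and $(CF1.b.iii)$ for the mapping $\Upsilon_{\digamma}$, exploiting that $\Upsilon_{\digamma}$ agrees with $\digamma$ everywhere except at $0_X$, where its value has been raised from $\bot$ to $\top$. Consequently every verification splits into a ``generic'' case, in which all the functions involved differ from $0_X$ and the statement collapses to the corresponding fuzzy filter axiom $(FF1.b)$, and the ``degenerate'' cases involving $0_X$, which must be treated by hand. The redefinition at $0_X$ is precisely what repairs the mismatch between the two axiom systems pointed out just before the statement.

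Axiom $(CF1.b.iii)$ is immediate: since $1_X \neq 0_X$ we have $\Upsilon_{\digamma}(1_X) = \digamma(1_X) = \top$ by $(FF1.b.i)$.

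For the arbitrary-join axiom $(CF1.b.i)$ I would fix a family $\{f_\lambda\}_{\lambda\in J}\subseteq L^X$ and distinguish two cases according to the value of $\bigvee_{\lambda}f_\lambda$. If $\bigvee_{\lambda}f_\lambda = 0_X$ (which in particular covers the empty family, where the meet on the left is $\top$), then $\Upsilon_{\digamma}(\bigvee_\lambda f_\lambda) = \top$ and the inequality holds trivially; this is exactly the place where the value $\bot$ of the original filter would have failed. If $\bigvee_\lambda f_\lambda \neq 0_X$, then, since $0_X$ is the least element of $L^X$, at least one member $f_{\lambda_0}$ differs from $0_X$; using the monotonicity axiom $(FF1.b.iii)$ together with $f_{\lambda_0}\leqslant \bigvee_\lambda f_\lambda$ I obtain $\bigwedge_{\lambda} \Upsilon_{\digamma}(f_\lambda) \leqslant \Upsilon_{\digamma}(f_{\lambda_0}) = \digamma(f_{\lambda_0}) \leqslant \digamma(\bigvee_\lambda f_\lambda) = \Upsilon_{\digamma}(\bigvee_\lambda f_\lambda)$, i.e. the whole meet is bounded by a single nonzero term.

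Finally, for the tensor axiom $(CF1.b.ii)$ I would again argue by cases on whether $f$, $g$ or $f\otimes g$ equals $0_X$. When $f$, $g$ and $f\otimes g$ are all distinct from $0_X$ the inequality is literally axiom $(FF1.b.iv)$; whenever $f\otimes g = 0_X$ the right-hand side is $\top$ and there is nothing to prove. The remaining and delicate configuration is $f = 0_X$ (or $g = 0_X$) with $f\otimes g \neq 0_X$: here I would show that this configuration cannot occur, because $0_X$ is an annihilator for the pointwise $\otimes$, that is $f\otimes 0_X = 0_X = 0_X\otimes g$, so that the right-hand side is again forced to be $\top$. This annihilation is the step that genuinely uses the underlying monoidal structure of $L$ (isotonicity of $\otimes$ together with $\top$ acting as its unit, whence $\bot\otimes a \leqslant \bot\otimes\top = \bot$), and it is where I expect the only real obstacle to lie; once it is in place, all degenerate cases of $(CF1.b.ii)$ reduce to ``the right-hand side equals $\top$'' and the verification is complete.
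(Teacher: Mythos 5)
The paper does not actually supply a proof of this proposition: all it offers is the informal discussion immediately preceding the statement, which notes that monotonicity $(FF1.b.iii)$ yields the arbitrary-join axiom for a nonempty family and that redefining the value at $0_X$ from $\bot$ to $\top$ ``obtains'' the rest. Your write-up is therefore strictly more complete than the source: the case split on whether $\bigvee_{\lambda}f_{\lambda}=0_X$ (which also absorbs the empty family, a case the paper's nonempty-$J$ computation silently omits), and the reduction of the join axiom to a single nonzero term $f_{\lambda_0}$, are exactly the details the paper leaves out, and they are correct. The one step you rightly flag as the real obstacle is also the one that outruns the stated hypotheses: in a complete quasi-monoidal lattice $\otimes$ is only required to be isotone in both arguments with $\top\otimes\top=\top$; the top element need not be a unit, so the inference $\bot\otimes a\leqslant\bot\otimes\top=\bot$ is not available in general, and the configuration $f=0_X$ with $f\otimes g\neq 0_X$ cannot be excluded from the cqml axioms alone. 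In that configuration one would need $\top\otimes\digamma(g)\leqslant\digamma(0_X\otimes g)$, which does not follow from the fuzzy filter axioms (they only give $\bot\otimes\digamma(g)\leqslant\digamma(0_X\otimes g)$). So one must either assume, as you implicitly do, that $\top$ is a unit for $\otimes$ (equivalently here, that $0_X$ annihilates under the pointwise tensor), or that $\otimes$ distributes over arbitrary, including empty, joins --- assumptions the paper itself uses tacitly elsewhere, e.g.\ in the Initial fuzzy filter theorem where $\bigvee_{f(x)=y}\bigl(h(x)\otimes k(x)\bigr)=\bigl(\bigvee_{f(x)=y}h(x)\bigr)\otimes\bigl(\bigvee_{f(x)=y}k(x)\bigr)$ is asserted. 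With that extra hypothesis made explicit your proof is complete; without it the difficulty lies in the proposition as stated for general objects of $\mathbf{LOQML}$, not in your argument.
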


\begin{$^{*}$corollary}[Fuzzy ultra-filtered-type topolgy] 
Let $\mathcal U: L^X\rightarrow L $ be a fuzzy ultrafilter on $(X,L)$. Then the mapping  
$\Upsilon_{\mathcal U}:L^X\rightarrow L $ defined , for each $g\in L^X$,  by
\[
\Upsilon_{\mathcal U}(g)=
\begin{cases}
\mathcal U(g)      &\text{if $g\ne 0_X$}\\
\top             &\text{if $g= 0_X$} 
\end{cases}
\]
is a fuzzy ultra-topology on $(X,L)$.
\end{$^{*}$corollary}

Thus, if we have a fuzzy filtered set $(X,L,F)$, we obtain the fuzzy  topological space $(X,L, \mathcal T_F)$;\,\ moreover, if\,\ $\phi$\,\ is a morphism between $(X,L,F)$ and $(Y,M,G)$  in $\mathbf{C-FFIL}$,\,\ the {\it same mappping} $\phi$   is also a morphism in\,\ $\mathbf{C-FTOP}$,\,\  and the diagram 
\[
\begin{diagram}
\node{(X,F)}\arrow{e,t}{}\arrow{s,l}{\phi}\node{(X,\mathcal T_F)}\arrow{s,r}{\phi_{*}=\phi}\\
\node{(Y,G)}\arrow{e,b}{}\node{(Y,\mathcal T_G)}
\end{diagram}
\]
is commutative. Also, we observe that if $\phi \ne \psi$\,\ are morphisms in\,\ $\mathbf{C-FFIL}$\,\ then\,\ $\phi_{*}\ne \psi_{*}$.\,\ In other words,
\begin{theorem}
The funtion\,\ $\mathcal T:\mathbf{C-FFIL} \to  \mathbf{C-FTOP}$\,\ that assigns to each object $(X,L,F)$ of $\mathbf{C-FFIL} $ the object $(X,L,\mathcal T_F)$ of\ $\mathbf{C-FTOP}$,\,\ and to each morphism\,\ $\phi$\,\ in\,\ $\mathbf{C-FFIL} $\,\ the morphism\,\ $\phi_{*}=\phi$\,\ in\,\ $\mathbf{C-FTOP}$\,\ is a faithful functor between the category \,\ $\mathbf{C-FFIL} $\,\ of fuzzy filtered sets,   and the category  \,\ $\mathbf{C-FTOP}$\,\ of fuzzy topological spaces.
\end{theorem}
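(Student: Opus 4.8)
The plan is to check, in order, that $\mathcal{T}$ is well defined on objects, well defined on morphisms, compatible with identities and composition, and injective on hom-sets; only the second of these carries any real content.

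Well-definedness on objects is already furnished by the Proposition on fuzzy filtered-type topologies, which guarantees that $(X,L,\mathcal{T}_F)$ is an object of $\mathbf{C-FTOP}$ whenever $(X,L,F)$ is a fuzzy filtered set. The functor laws are then formal: since $\mathcal{T}$ leaves the underlying $\mathbf{SET\times C}$-arrow untouched (i.e. $\phi_{*}=\phi$), and since composition and identities in both $\mathbf{C-FFIL}$ and $\mathbf{C-FTOP}$ are by definition those of the ground category $\mathbf{SET\times C}$, the identities $\mathcal{T}(id)=id$ and $\mathcal{T}(\psi\circ\phi)=\mathcal{T}(\psi)\circ\mathcal{T}(\phi)$ hold without computation. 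Faithfulness is equally immediate: if $\mathcal{T}(f,\Phi)=\mathcal{T}(g,\Psi)$ for parallel $\mathbf{C-FFIL}$-morphisms, then $(f,\Phi)=(g,\Psi)$ as $\mathbf{SET\times C}$-arrows, because $\mathcal{T}$ is the identity on underlying arrows.

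The substantive step is that a fuzzy filter continuous morphism $(f,\Phi):(X,L,F)\to(Y,M,G)$ remains fuzzy continuous when regarded as $(f,\Phi):(X,L,\mathcal{T}_F)\to(Y,M,\mathcal{T}_G)$; that is, I must establish $\Phi^{op}\circ\mathcal{T}_G\leqslant\mathcal{T}_F\circ(f,\Phi)^{\leftarrow}$ on $M^Y$ from the hypothesis $\Phi^{op}\circ G\leqslant F\circ(f,\Phi)^{\leftarrow}$. I would verify this pointwise in $h\in M^Y$, distinguishing cases by whether $h=0_Y$ and whether $(f,\Phi)^{\leftarrow}(h)=0_X$. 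If $h\neq0_Y$ and $(f,\Phi)^{\leftarrow}(h)\neq0_X$, then $\mathcal{T}_G(h)=G(h)$ and $\mathcal{T}_F((f,\Phi)^{\leftarrow}(h))=F((f,\Phi)^{\leftarrow}(h))$, so the desired inequality is exactly the filter continuity hypothesis. If instead $(f,\Phi)^{\leftarrow}(h)=0_X$, the right-hand side equals $\mathcal{T}_F(0_X)=\top$ and the inequality is trivial.

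The decisive case is $h=0_Y$, where $\mathcal{T}_G(0_Y)=\top$ has been raised above $G(0_Y)=\bot$, so that the left-hand side becomes $\Phi^{op}(\top)$. The key observation is the compatibility of the backward operator with the zero map,
\[
(f,\Phi)^{\leftarrow}(0_Y)=\Phi^{op}\circ 0_Y\circ f=0_X,
\]
which holds because $0_Y\circ f$ is the constant map $\bot$ and $\Phi^{op}(\bot)=\bot$, $\Phi$ being a morphism in $\mathbf{CQML}$. Consequently the right-hand side is likewise raised to $\mathcal{T}_F(0_X)=\top$, and $\Phi^{op}(\top)=\top\leqslant\top$ settles the case. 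I expect this last case to be the only genuine obstacle: passing from filter to topology raises the value at the zero map from $\bot$ to $\top$, which enlarges the domain side of the continuity inequality, and the reason no violation occurs is precisely that $(f,\Phi)^{\leftarrow}(0_Y)=0_X$ raises both sides simultaneously. Everything else collapses either to the filter hypothesis or to a comparison with $\top$, and functoriality together with faithfulness are then purely formal consequences of $\phi_{*}=\phi$.
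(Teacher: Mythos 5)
Your proposal is correct, and it is in fact more complete than what the paper offers: the theorem is stated there without a formal proof, being presented as a summary of the preceding discussion, which verifies the object assignment (via the proposition on fuzzy filtered-type topologies, itself only sketched through the observation that isotonicity of $\digamma$ gives the arbitrary-join axiom for nonempty index sets and that resetting the value at $0_X$ to $\top$ handles the empty join) and then simply \emph{asserts} that the same ground morphism remains a morphism in $\mathbf{C\text{-}FTOP}$. The one step with genuine content --- that $\Phi^{op}\circ\mathcal T_G\leqslant\mathcal T_F\circ(f,\Phi)^{\leftarrow}$ follows from $\Phi^{op}\circ G\leqslant F\circ(f,\Phi)^{\leftarrow}$ after the value at the zero map has been raised from $\bot$ to $\top$ --- is exactly the step the paper skips and the step you supply. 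Your case analysis is sound: the only case where the left-hand side is genuinely enlarged is $h=0_Y$, and there the identity $(f,\Phi)^{\leftarrow}(0_Y)=\Phi^{op}\circ 0_Y\circ f=0_X$ (using that $\Phi^{op}$, as a $\mathbf{CQML}$-morphism, preserves $\bot$ --- a fact the paper itself relies on in the proof of its final-structure theorem) raises the right-hand side to $\top$ as well; the remaining cases reduce to the filter-continuity hypothesis or to a trivial comparison with $\top$. The functoriality and faithfulness arguments are, as you say, formal consequences of $\phi_{*}=\phi$ together with the fact that composition and identities in both concrete categories are inherited from $\mathbf{SET\times C}$, which matches the paper's (implicit) reasoning.
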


\vspace{2cm}
\end{document}